\theoremstyle{plain}
\newtheorem{Thm}{Theorem}[section]
\newtheorem{Lem}[Thm]{Lemma}
\newtheorem{Prop}[Thm]{Proposition}
\newtheorem{Cor}[Thm]{Corollary}
\theoremstyle{definition}
\newtheorem{Rem}[Thm]{Remark}
\numberwithin{equation}{section}
\DeclareMathOperator{\End}{End}
\DeclareMathOperator{\Ker}{Ker}
\DeclareMathOperator{\tr}{tr}
\DeclareMathOperator{\dom}{\mathbf{d}}
\DeclareMathOperator{\ran}{\mathbf{r}}
\DeclareMathOperator{\Hdom}{dom}
\newcommand{\bbK}{\mathbb{K}}
\newcommand{\bbS}{\mathbb{S}}
\newcommand{\olbbS}{\overline{\mathbb{S}}}
\newcommand{\olTheta}{\overline{\Theta}}
\newcommand{\olK}{\overline{K}}
\newcommand{\olS}{\overline{S}}
\newcommand{\olepsilon}{\overline{\epsilon}}
\newcommand{\olphi}{\overline{\phi}}
\newcommand{\oltheta}{\overline{\theta}}
\newcommand{\olxi}{\overline{\xi}}
\newcommand{\olf}{\overline{f}}
\newcommand{\olh}{\overline{h}}
\newcommand{\Rho}{\mathrm{P}}
\newcommand{\lsd}[2]{#1\kern -1pt\mathrel{*^{\lambda}}\kern -1pt#2}
\newcommand{\rsd}[2]{#1\kern -1pt\mathrel{\bowtie}\kern -1pt#2}
\newcommand{\gsd}[2]{#1\kern -1pt\mathrel{\star}\kern -1pt#2}
\newcommand{\Hwr}[2]{#1\kern 0pt\mathrel{\operatorname{Wr}^H}\kern -1pt#2}
\newcommand{\lwr}[2]{#1\kern 0pt\mathrel{\operatorname{Wr}^{\lambda}}\kern -1pt#2}
\newcommand{\Hwrp}[2]{#1\kern 0pt\mathrel{\operatorname{Wr}^H_\eta}\kern -1pt#2}
\newcommand{\rwr}[2]{#1\kern 0pt\mathrel{\operatorname{Wr}^{\bowtie}}\kern-1pt#2}
\newcommand{\act}[2]{{^{#1}\kern -2pt {#2}}}
\newcommand{\acth}[2]{{^{#1}\kern -1pt {#2}}}
\newcommand{\rtheta}{\mathrel{\theta}}
\begin{document}
	
\title[Normal extensions and full restricted semidirect products]
{Normal extensions and full restricted semidirect products of inverse semigroups}

%\subtitle{\small Dedicated to E.\;S.\;Ljapin on the 110th anniversary of his birth}

\author{M\'aria B.\ Szendrei}
\address{Bolyai Institute, University of Szeged, Aradi v\'ertan\'uk tere 1, Szeged, Hungary, H-6720}
\email{m.szendrei@math.u-szeged.hu}

%\date{Febr 12, 2025}
%\date{\today}

\thanks{{\sc Dedicated to E.\;S.\;Ljapin on the 110th anniversary of his birth.\\}
	Research is partially supported by the National Research, Development and Innovation Office, grants NKFI/K128042, NKFI/K138892 and NKFI/K142484.\\
	{\it Mathematical Subject Classification (2020):} 20M10, 20M18.\\
	{\it Key words:} Inverse semigroup, Normal extension, Full restricted semidirect product, Billhardt congruence, Translational hull}

\begin{abstract}
	We characterize the normal extensions of inverse semigroups isomorphic to full restricted semidirect products, and present a Kalou\v{z}nin--Krasner theorem which holds for a wider class of normal extensions of inverse semigroups than that in the well-known embedding theorem due to Billhardt, and also strengthens that result in two respects.
	First, the wreath product construction applied in our result, and stemmming from Houghton's wreath product, is a full restricted semidirect product not merely a $\lambda$-semidirect product.
	Second, the Kernel classes of our wreath product construction are direct products of some Kernel classes of the normal extension to be embedded rather than only inverse subsemigroups of the direct power of its whole Kernel.
\end{abstract}

\maketitle

\section{Introduction}

The construction of forming a semidirect product of groups can be naturally generalized for semigroups if one allows actions by endomorphisms instead of actions by automorphisms. 
This construction and a similar generalization of wreath product of groups play fundamental roles in the theory of semigroups, and especially, of finite semigroups.
A well-known result of the theory of inverse semigroups which is due to O'Carroll \cite{OCarroll} establishes that each $E$-unitary inverse semigroup, that is, each normal extension of a semilattice by a group, is embeddable in a semidirect product of a semilattice by a group.
This result is generalized by Billhardt \cite{BEom} for extensions of Clifford semigroups by groups.
However, semidirect and wreath products of inverse semigroups fail to be inverse in general except when the second factor is a group.
To overcome this difficulty, 
Billhardt \cite{Blambda} (see also \cite[Section 5]{LawInvSg}) introduced modified versions of these constructions appropriate for inverse semigroups, and he 
called them $\lambda$-semidirect and $\lambda$-wreath products.
The action in a $\lambda$-semidirect product is by endomorphisms in the same way as in a usual semidirect product of semigroups but both the underlying set and the multiplication rule are modified.
In the same paper, Billhardt introduced a class of congruences 
and proved that a normal extension of inverse semigroups determined by such a congruence is embeddable in a $\lambda$-wreath product.
Lawson named such congruences in his monograph \cite{LawInvSg} Billhardt congruences. 
For example, idempotent separating congruences are Billhardt, see \cite{BKK} (or \cite[Chapter 5]{LawInvSg}). 
In the same paper, an analogue of a split group extension is also introduced as a normal extension determined by a split Billhardt congruence, and these extensions are proved to be isomorphic to full restricted semidirect products. 
A full restricted semidirect product is an inverse subsemigroup of a $\lambda$-semidirect product provided that the action fulfils additional conditions. 
It is important to notice that a full restricted semidirect product is a closer analogue of a semidirect product of groups than a $\lambda$-semidirect product
since a full restricted semidirect product is a normal extension of its first factor by the second but this is not the case with a $\lambda$-semidirect product in general, except when the second factor is a group. 

Much earlier than $\lambda$-semidirect product was introduced, an embedding of an idempotent separating extension into another kind of wreath product
was presented by Houghton \cite{Hough}, see also \cite[Section 11.2]{Meldrum}. 
The notion of Houghton's wreath product also stems from the notion of the standard wreath product of groups but in a way that the direct power of the first factor to the second is replaced by a semilattice of the direct powers of the first factor to the principal left ideals of the second.
Houghton's wreath product and $\lambda$-semidirect product are closely related to each other, see \cite{SzMregsem} and \cite[Section 5.5]{LawInvSg}.  

The embedding results mentioned so far mimic the group case also in the sense that only the Kernel and the factor of an extension are taken into consideration.
However, among inverse semigroups, it is more natural to consider the Kernel and the trace of a congruence simultaneously rather than only the Kernel, since distinct congruences might have the same Kernels.
In this paper we are interested in embeddability of normal extensions in a $\lambda$-semidirect product in such a way that trace is also `preserved'. 
Motivated by Billhardt's statement \cite[Lemma 3]{BKK} which implies that each $\lambda$-semidirect product $\lsd{K}{T}$ of $K$ by $T$ is naturally embeddable in a full restricted semidirect product of the Kernel of $\lsd{K}{T}$ by $T$, we focus on embeddability of normal extensions in a full restricted semidirect product.
It will turn out that Houghton's wreath product can be naturally adapted to our purposes.  

The main results of the paper are in Sections \ref{sect:char-rsd} and \ref{sect:emb-rsd}.
In Section \ref{sect:prelim}, we mention the main facts on inverse semigroups which are needed in the paper.
Moreover, we slightly extend \cite[Lemma 3]{BKK} mentioned in the previous paragraph and make it more explicit by noticing that, for any inverse semigroups $K$ and $T$, a $\lambda$-semidirect product $\lsd{K}{T}$ and the full restricted semidirect product of the Kernel of $\lsd{K}{T}$ by $T$ constructed from it in \cite[Lemma 3]{BKK} are, actually, isomorphic to each other.
We start Section \ref{sect:char-rsd} by giving an alternative system of axioms for an action needed in the definition of a full restricted semidirect product which allows us to simplify later arguments.
The goal of the section is to characterize the normal extensions isomorphic to full restricted semidirect products.
We introduce classes of congruences called almost Billhardt and split almost Billhardt congruences which generalize Billhardt and split Billhardt congruences, respectively, and we prove that a normal extension is isomorphic to a full restricted semidirect product precisely when it is defined by a split almost Billhardt congruence.
In Section \ref{sect:emb-rsd} we present a Kalou\v{z}nin--Krasner theorem for each normal extension defined by an almost Billhardt congruence $\theta$ which embeds such a normal extension into a full restricted semidirect product whose Kernel classes are direct products of idempotent $\theta$-classes. 
The `general view', formulated also in \cite[p.\ 156]{LawInvSg}, that Billhardt congruences are intimately connected with $\lambda$-semidirect products and split Billhard congruences with full restricted semidirect products, is put by this theorem in a different light.
The full restricted semidirect product appearing in our result is an inverse subsemigroup of Houghton's wreath product of the Kernel of the normal extension by its factor which corresponds to the respective normal extension triple not merely to the Kernel and the factor.

\section
{Preliminaries and an initial observation}
\label{sect:prelim}

In this section we outline the most important facts needed in the paper on normal extensions of inverse semigroups in general and on three constructions, namely $\lambda$-semidirect product, full restricted semidirect product and Houghton's wreath product.
A short introduction to translations is also included.
For more details, the reader is referred to the monographs by Lawson \cite[Sections 5.1 and 5.3]{LawInvSg}, Meldrum \cite[Section 11.2]{Meldrum} and Petrich \cite[Section VI.6]{PetInvSg}.
Additionally, we slightly strengthen a statement due to Billhardt \cite{BKK} which presents a close connection between embeddability of a normal extension in a $\lambda$-semidirect product and in a full restricted semidirect product.

Our notation mainly follows that in \cite{LawInvSg}.
In particular, functions are written as left operators, and are composed from the right to the left.
The only exceptions are right translations which are written as right operators, and their composition is carried over from the left to the right.
It is also worth calling the attention in advance, that the terms `kernel' and `Kernel' will be used in the following manner, see \cite{LawInvSg}.
The \emph{Kernel of a congruence} $\rho$ on an inverse semigroup $S$, denoted by  $\Ker \rho$, is the inverse subsemigroup of $S$ consisting of the elements $\rho$-related to an idempotent.
The \emph{kernel of a homomorphism} $\phi\colon S\to T$ between inverse semigroups $S,T$, denoted by $\ker \phi$, is the congruence on $S$ induced by $\phi$, and    
the \emph{Kernel of} $\phi$, denoted by $\Ker \phi$, is the Kernel of the congruence $\ker \phi$. 

Most of the facts mentioned in this section are applied in the rest of the paper without reference.

\subsection*{Normal extension}

Let $K$ be an inverse semigroup, let $E$ be a semilattice, and consider a surjective homomorphism $\eta\colon K\to E$.
Then the \emph{semilattice decomposition corresponding to} $\eta$ is 
$K = \bigcup_{e\in E} K_e$
where 
\begin{equation*}
	K_e = \{a\in K : \eta(a) = e\}\quad (e\in E)
\end{equation*}
are the $(\ker \eta)$-classes which are inverse subsemigroups in $K$, and $K_eK_f\subseteq K_{ef}$ for any $e,f\in E$.
Conversely, such a decomposition determines a surjective homomorphism 
\begin{equation*}
	\eta\colon K\to E\ \ \hbox{where}\ \ \eta(a) = e\ \hbox{if}\ a\in K_e,
\end{equation*}
so that these two formulations are equivalent.
We use these alternatives simultaneously.

Now let $K$ and $T$ be inverse semigroups and $\eta\colon K\to E(T)$ a surjective homomorphism.
Then $(K,\eta,T)$ is called a \emph{normal extension triple}, and an inverse semigroup $S$ is said to be a \emph{normal extension of $K$ by $T$ along $\eta$} if 
there exists an embedding (i.e., an injective homomorphism) $\iota\colon K\to S$  and a surjective homomorphism $\tau\colon S\to T$ such that $\iota(K)=\Ker \tau$ and $\tau\iota = \eta$.
Such a triple $(\iota,S,\tau)$ is called a \emph{solution of the normal extension problem for the triple $(K,\eta,T)$}. 
Two solutions $(\iota,S,\tau)$ and $(\iota',S',\tau')$ for $(K,\eta,T)$ are said to be \emph{equivalent} if there is an isomorphism $\phi\colon S\to S'$ such that 
$\phi\iota = \iota'$ and $\tau'\phi = \tau$.

Somewhat more generally, now let  
$(K,\eta,T)$ and $(K',\eta',T')$ be normal extension triples, and consider a solution  
$(\iota,S,\tau)$ and $(\iota',S',\tau')$, respectively, for each of them. 
If there exists a triple $(\chi,\phi,\psi)$ of embeddings (resp.\ isomorphisms) $\chi\colon K\to K'$, $\phi\colon S\to S'$, $\psi\colon T\to T'$ such that
$\iota' \chi = \phi \iota$ and $\tau' \phi = \psi \tau$
then we say that the triple $(\chi,\phi,\psi)$ is an 
\emph{embedding from $(\iota,S,\tau)$ into  $(\iota',S',\tau')$} (resp.\ \emph{isomorphism from $(\iota,S,\tau)$ onto $(\iota',S',\tau')$}).
It is routine to see that if $(\chi,\phi,\psi)$ is such an embedding (resp.\ 
isomorphism) then $\chi$ and $\psi$ are uniquely determined by $\phi$, and the relations
\begin{equation*}
\phi\iota(K)\subseteq \iota'(K')\ (\hbox{resp.}\ 
\phi\iota(K) = \iota'(K'))\quad \hbox{and}\quad 
\ker \tau = \ker \tau'\phi
\end{equation*}
hold. 
Conversely, if $\phi\colon S\to S'$ is an embedding (resp.\ isomorphism) fulfulling 
these conditions 
then there exist appropriate $\chi$ and $\psi$ to form with $\phi$ an embedding from $(\iota,S,\tau)$ into  $(\iota',S',\tau')$ (resp.
isomorphism from $(\iota,S,\tau)$ onto $(\iota',S',\tau')$).
Based on this fact, we will consider an embedding (resp.\ isomorphism) from $(\iota,S,\tau)$ into $(\iota',S',\tau')$ to be such a $\phi$ rather than the respective triple $(\chi,\phi,\psi)$.

Notice that if $(\iota,S,\tau)$ is a solution of the normal extension problem for the normal extension triple $(K,\eta,T)$ then it is equivalent to the solution $(1_{\Ker \tau, S},S,(\ker \tau)^\natural)$ where $1_{\Ker \tau, S}$ stands for the function ${\Ker \tau\to S},\ a\mapsto a$.
Clearly, $T$ is isomorphic to $S/\ker \tau$, $K$ is isomorphic to $\Ker \tau$ and $\tr \eta = \tr \tau$.
Therefore, up to isomorphism, we can restrict our attention to the solutions for $(K,\eta,T)$ which are of the form $(1_{\Ker \theta, S},S,\theta^\natural)$ where $\theta$ is a congruence on $S$ such that  $\tr \theta = \tr \eta$.
Since a congruence on $S$ is uniquely determined by its Kernel and trace such a solution will be simply denoted by $(S,\theta)$.

If $(S,\theta)$ and $(S',\theta')$ are solution for $(K,\eta,T)$ and $(K',\eta',T')$, respectively, then an embedding (resp.\ isomorphism) $\phi\colon S\to S'$ is an embedding (resp.\ isomorphism) from $(S,\theta)$ into (resp.\ onto) $(S',\theta')$ if and only if
\begin{eqnarray*}
&\ &\phi(K)\subseteq K'\ (\hbox{resp.}\ \phi(K) = K'), \quad \hbox{and}\\
&\ &s\rtheta s'\quad \hbox{if and only if}\quad \phi(s) \rtheta' \phi(s')\quad \hbox{for every}\quad s,s'\in S.
\end{eqnarray*}

\subsection*{Constructions}

Let $K$ and $T$ be inverse semigroups. 
We say that \emph{$T$ acts on $K$ by endomorphisms} if a function $T\times K\to K,\ (t,a)\mapsto t\cdot a$ is given such that the transformations $\alpha_t\ (t\in T)$ of $K$ defined by $a\mapsto t\cdot a$ are endomorphisms and the function $T\to \End K,\ t\mapsto \alpha_t$ is a homomorphism.

If $T$ acts on $K$ by endomorphisms then the \emph{$\lambda$-semidirect product of $K$ by $T$ with respect to this action} is the inverse semigroup defined on the set
\begin{equation*}
	\lsd KT = \{(a,t)\in K\times T : a = \ran(t)\cdot a\}
\end{equation*}
by the operation
\begin{equation*}
	(a,t)(b,u) = ((\ran(tu)\cdot a)(t\cdot b),tu).
\end{equation*}
The second projection $\pi_2\colon \lsd KT \to T,\ (a,t)\mapsto t$ is obviously a surjective homomorphism. 
A routine calculation shows that the Kernel of $\pi_2$ is
$\bbK = \bigcup_{e\in E(T)} \bbK_e$ where
\begin{equation}
\label{equ:Ker-lsd}
\bbK_e = \{(a,e)\in K\times E(T): e\cdot a = a\} = e\cdot K \times \{e\}\ 
(e\in E(T)),
\end{equation}
and so $\lsd KT$ is a normal extension of $\bbK$ by $T$ along 
$\eta\colon \bbK\to E(T),\ (a,e) \mapsto e$.
Notice that $\lsd KT = \lsd {K'}T$ where 
$K' = \{a\in K : a = e\cdot a\ \mbox{for some}\ e\in E(T)\} = 
\bigcup \{e\cdot K : e\in E(T)\}$ 
is an inverse subsemigroup in $K$.
Consequently, we can suppose without loss of generality in every $\lambda$-semidirect product $\lsd KT$ that 
$K = \bigcup \{e\cdot K : e\in E(T)\}$.

Now assume that the action of $T$ on $K$ has the following property:

\medskip\noindent
	(\hbox{AFR})\quad 
	there exists a surjective homomorphism $\epsilon\colon K\to E(T)$ such that
	\begin{equation}\label{conAFR}
	e\cdot a = a\quad \hbox{if and only if}\quad \epsilon(a) \le e,\quad \hbox{for all} \ a\in K\ \hbox{and}\ e\in E(T).
	\end{equation}
\smallskip\noindent
Then 
\begin{equation*}
\rsd KT = \{(a,t)\in K\times T: \epsilon(a) = \ran(t)\}
\end{equation*}
forms an inverse subsemigroup in $\lsd KT$ where the operation has the form 
\begin{equation*}
(a,t)(b,u) = (a(t\cdot b),tu)
\end{equation*}
usual in semidirect products of groups and semigroups.
The inverse semigroup $\rsd KT$ is called the \emph{full restricted semidirect product of $K$ by $T$ with respect to the given action (having property\/ {\rm(AFR)})}.  
The second projection $\pi_2\colon \rsd KT\to T$ is a surjective homomorphism also in this case, and its Kernel is easily seen to be 
$\bbK = \bigcup_{e\in E(T)} \bbK_e$ where
\begin{equation}
\label{equ:Ker-rsd}
\bbK_e = \{(a,e)\in K\times E(T): \epsilon(a) = e\} = K_e \times \{e\}\ (e\in E(T)).
\end{equation}
Consequently, $\bbK$ is isomorphic to $K$, and 
$\rsd KT$ is a normal extension of $K$ by $T$ along $\epsilon$.

If $\gsd KT$ is a $\lambda$-semidirect or a full restricted semidirect product of $K$ by $T$ then the only congruence considered in the paper on it will be 
$\ker \pi_2$.
Therefore it causes no confusion if we denote the normal extension $(\gsd KT,\ker \pi_2)$ simply by $\gsd KT$.
This normal extension corresponding to a $\lambda$-semidirect or a full restricted semidirect product is meant also when saying, for instance, that a normal extension is embeddable in (resp. isomorphic to) a $\lambda$-semidirect or a full restricted semidirect product.

Comparing these constructions to each other as normal extensions, a great disadvantage of $\lambda$-semidirect product is that its Kernel is far from being isomorphic to the first factor in general.

An important connection between these two constructions is noticed in \cite[Lemma 3]{BKK}.
Without mentioning that $\bbK$ in (\ref{equ:Ker-lsd}) is the Kernel of the second projection $\pi_2$ of $\lsd KT$, an action of $T$ on $\bbK$ is defined by means of the action of $T$ on $K$ by the rule 
\begin{equation} \label{equ:act-on-bbK}
t\cdot (a,e) = (t\cdot a,\ran(te))\quad (t\in T,\ (a,e)\in \bbK_e),
\end{equation}
and it is checked that the homomorphism $\epsilon\colon \bbK\to E(T)$ corresponding to the decomposition $\bbK = \bigcup_{e\in E(T)} \bbK_e$ in
(\ref{equ:Ker-lsd}) is appropriate for defining a full restricted semidirect product $\rsd{\bbK}{T}$.
Moreover, it is shown that the function 
\begin{equation*}
\psi\colon \lsd KT \to \rsd{\bbK} {T},\quad 
   \psi(a,t) = ((a,\ran(t)),t)
\end{equation*}
is an injective homomorphism.
However, this can be easily strengthened as follows.

\bigskip

\begin{Lem}
	The function $\psi$ is 
	an isomorphism of normal extensions. 
\end{Lem}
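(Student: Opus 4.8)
Since \cite[Lemma 3]{BKK} already yields that $\psi$ is an injective homomorphism, the plan is to supplement this with surjectivity and with the two conditions that, according to the characterization of isomorphisms of normal extensions recalled above, upgrade an isomorphism of the underlying inverse semigroups to an isomorphism of the associated normal extensions: that $\psi$ maps the Kernel $\bbK$ of the second projection of $\lsd KT$ onto the Kernel of the second projection of $\rsd{\bbK}T$, and that two elements of $\lsd KT$ are related under $\ker\pi_2$ exactly when their $\psi$-images are related under $\ker\pi_2$ in $\rsd{\bbK}T$.

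First I would establish surjectivity by writing out a general element of $\rsd{\bbK}T$. By \eqref{equ:Ker-rsd} applied to the triple $(\bbK,\epsilon,T)$, such an element is of the form $((a,e),t)$ with $(a,e)\in\bbK$ and $\epsilon(a,e)=\ran(t)$. As $\epsilon$ is the homomorphism corresponding to the decomposition \eqref{equ:Ker-lsd}, we have $\epsilon(a,e)=e$, so the defining constraint forces $e=\ran(t)$; moreover $(a,\ran(t))\in\bbK$ says precisely that $\ran(t)\cdot a=a$, i.e.\ $(a,t)\in\lsd KT$. Hence every element of $\rsd{\bbK}T$ equals $\psi(a,t)=((a,\ran(t)),t)$ for some $(a,t)\in\lsd KT$, so $\psi$ is onto, and together with injectivity it is an isomorphism of inverse semigroups.

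The compatibility with the normal extension structure then follows from the single observation that the second projection of $\rsd{\bbK}T$ composed with $\psi$ equals the second projection of $\lsd KT$, since $\psi(a,t)=((a,\ran(t)),t)$ has second coordinate $t$. This identity gives the congruence condition at once, as both copies of $\ker\pi_2$ are nothing but equality of the second coordinate; and, since in each of the two constructions the Kernel of $\pi_2$ is exactly the preimage of $E(T)$ under $\pi_2$, the bijectivity of $\psi$ forces $\psi(\bbK)$ to coincide with the Kernel of the second projection of $\rsd{\bbK}T$. I do not anticipate a genuinely difficult step: the only point needing care is the surjectivity argument, where one must notice that the constraint $\epsilon=\ran$ built into a full restricted semidirect product pins the middle coordinate of an element of $\rsd{\bbK}T$ down to $\ran(t)$, which is exactly the shape produced by $\psi$; all the multiplicative identities are already subsumed in Billhardt's result and need not be recomputed.
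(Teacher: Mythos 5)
Your proposal is correct and follows essentially the same route as the paper: both rely on \cite[Lemma 3]{BKK} for injectivity and the homomorphism property, verify surjectivity by noting that the constraint $\epsilon(a,e)=\ran(t)$ pins down a general element of $\rsd{\bbK}{T}$ to the form $((a,\ran(t)),t)=\psi(a,t)$, and observe that the Kernel and congruence conditions are immediate because $\psi$ preserves the second coordinate. The paper simply labels the normal-extension compatibility ``straightforward'' where you spell it out via $\pi_2^{\rsd{\bbK}{T}}\psi=\pi_2$; no substantive difference.
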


\begin{proof}
	It is straightforward that 
	$\psi(\bbK)$ coincides with the Kernel of the second projection 
	$\pi_2^{\rsd{\bbK}{T}}$ 
	of $\rsd{\bbK} {T}$, and that
	$(a,t) \mathrel{\ker \pi_2} (a',t')$ for some $(a,t),(a',t')\in \lsd KT$ if and only if $((a,\ran(t)),t) \mathrel{\ker \pi_2^{\rsd{\bbK}{T}}} ((a',\ran(t')),t')$ in $\rsd{\bbK} {T}$.
	Thus $\psi$ is an embedding from the normal extensions $\lsd KT$ to the normal extension $\rsd{\bbK} {T}$, and in order to establish that it is also an isomorphism, it suffices to see that $\psi$ is also surjective.
	Let $((a,e),t)$ be an arbitrary element of $\rsd{\bbK} {T}$.
	Then $e = \epsilon(a,e) = \ran(t)$ by the definition of $\rsd{\bbK}{T}$, and
	$e\cdot a = a$ by the definition of $\bbK$. 
	Hence $((a,e),t) = \psi(a,t)$ and surjectivity of $\psi$ is also verified. 
\end{proof}

This immediately implies the following.

\bigskip

\begin{Prop}
	Let $(K,\eta,T)$ be a normal extension triple.
	A normal extension of $K$ by $T$ along $\eta$ is embeddable, as a normal extension, in a $\lambda$-semi\-direct product $\lsd{\olK}{T}$ for some inverse semigroup $\olK$ if and only if it is embeddable in the full restricted semidirect product of \,$\overline{\mathbb{K}}$ by $T$ where $\overline{\mathbb{K}}$ is the Kernel of the second projection of \,$\lsd{\olK}{T}$.
\end{Prop}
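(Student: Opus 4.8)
The plan is to deduce the statement directly from the preceding Lemma, using the observation that embeddability of normal extensions is stable under composition with an isomorphism of normal extensions. Fix a normal extension $(S,\theta)$ of $K$ by $T$ along $\eta$ and, for a given inverse semigroup $\olK$, write $\olbbK$ for the Kernel of the second projection $\pi_2$ of $\lsd{\olK}{T}$. By the Lemma the map $\psi\colon\lsd{\olK}{T}\to\rsd{\olbbK}{T}$, $\psi(a,t)=((a,\ran(t)),t)$, is an isomorphism of normal extensions for every such $\olK$; in particular it carries the Kernel of $\lsd{\olK}{T}$ onto the Kernel of $\rsd{\olbbK}{T}$, and $x\mathrel{\ker\pi_2}y$ if and only if $\psi(x)\mathrel{\ker\pi_2^{\rsd{\olbbK}{T}}}\psi(y)$.

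First I would treat the forward implication. Suppose $(S,\theta)$ embeds, as a normal extension, into $\lsd{\olK}{T}$ via an embedding $\phi$. I claim that $\psi\phi$ is an embedding of $(S,\theta)$ into $\rsd{\olbbK}{T}$. By the characterisation of embeddings of normal extensions given earlier, it suffices to check that $\psi\phi$ sends the Kernel of $(S,\theta)$ into that of $\rsd{\olbbK}{T}$ and that it reflects the respective congruences. Both are immediate: $\phi$ already enjoys these two properties relative to $\lsd{\olK}{T}$, and post-composing with the isomorphism of normal extensions $\psi$ preserves them, since $\psi$ maps Kernel onto Kernel and both preserves and reflects $\ker\pi_2$.

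For the converse I would argue symmetrically with $\psi^{-1}$, which is again an isomorphism of normal extensions. If $(S,\theta)$ embeds into $\rsd{\olbbK}{T}$ via an embedding $\phi'$, then $\psi^{-1}\phi'$ embeds $(S,\theta)$ into $\lsd{\olK}{T}$ by the same reasoning. In both directions the inverse semigroup $\olK$, and hence $\olbbK$, is held fixed, so the existential quantifier over $\olK$ on the left-hand side matches the one implicit on the right-hand side.

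I do not anticipate a genuine obstacle: as the sentence introducing the statement announces, the result is an immediate consequence of the Lemma. The only point demanding a little care is the purely formal verification that the two defining conditions of an embedding of normal extensions survive pre- and post-composition with an isomorphism of normal extensions; once this is recorded, the equivalence follows at once.
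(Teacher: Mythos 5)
Your proposal is correct and matches the paper's intent exactly: the paper offers no separate proof, stating only that the Proposition ``immediately'' follows from the Lemma, and the content of that immediacy is precisely your observation that composing an embedding of normal extensions with the isomorphism $\psi$ (resp.\ $\psi^{-1}$) of normal extensions again yields an embedding. Your spelled-out verification of the two defining conditions for an embedding under such composition is a faithful elaboration of the same argument.
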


\bigskip

\begin{Rem} \label{rem:act-on-bbK}
	{\rm 
	For our later convenience, notice that the action of $T$ on $\bbK$ induced by the isomorphism 
	$K\to \bbK,\ a\mapsto (a,e)$ from the first factor of a full restricted semidirect product $\rsd KT$ to the Kernel of the second projection of $\rsd KT$ in (\ref{equ:Ker-rsd}) is just that in (\ref{equ:act-on-bbK}).
	}
\end{Rem}

Let $K$ and $T$ be inverse semigroups, and denote by $P_{K,T}$
the set $\bigcup_{e\in E(T)} K^{Te}$ 
of all functions from principal left ideals of $T$ into $K$.
The domain of a function $\alpha\in P_{K,T}$ is denoted by $\Hdom \alpha$.
Define `pointwise' multiplication $\oplus$ on $P_{K,T}$ in the usual way: for any $\alpha,\beta\in P_{K,T}$, let
$\Hdom(\alpha\oplus \beta)=\Hdom \alpha\cap\Hdom \beta$, and 
$(\alpha\oplus \beta)(x)=\alpha(x)\beta(x)$ for every $x\in \Hdom(\alpha\oplus \beta)$.
Since the intersection of two principal left ideals of an inverse semigroup is a principal left ideal,
$P_{K,T}$ forms an inverse semigroup with respect to the operation $\oplus$.
Moreover, introduce an action of $T$ on $P_{K,T}$ by endomorphisms 
as follows: for every $t\in T$ and $\alpha\in P_{K,T}$,
let $t\cdot \alpha\colon (\Hdom \alpha)t^{-1}\to K,\ x\mapsto \alpha(xt)$.
Finally, consider the set
\begin{equation*}
	\Hwr KT=\{(\alpha,t)\in P_{K,T}\times T: \Hdom \alpha = Tt^{-1}\},
\end{equation*}
and define a  multiplication on it by the rule
\begin{equation*}
	(\alpha,t)(\beta,u)=(\alpha\oplus (t\cdot \beta),tu).
\end{equation*}
The inverse semigroup $\Hwr KT$ obtained in this way is called \emph{Houghton's wreath product of $K$ by $T$}.

Notice that if $e,f\in E(T)$ and $\alpha\in P_{K,T}$ such that $\Hdom \alpha = Tf$
then $\Hdom (e\cdot \alpha) = \Hdom \alpha$ if and only if $f \le e$, and this holds if and only if $e\cdot \alpha = \alpha$.
Thus the action of $T$ on $P_{K,T}$ defined above satisfies condition (\ref{conAFR}) for the function $\epsilon\colon P_{K,T}\to E(T)$ where $\epsilon(\alpha)$ is chosen to be the unique idempotent generator of the principal left ideal $\Hdom\alpha$ for any $\alpha \in P_{K,T}$. 
This defines a full restricted semidirect product $\rsd {P_{K,T}}T$, and it is easy to see that $\Hwr KT = \rsd {P_{K,T}}T$. 

\subsection*{Translations}

Let $S$ be a semigroup.
A transformation $\lambda$ of $S$ is a \emph{left translation} if 
$\lambda(st) = (\lambda(s))t$ for every $s,t\in S$, and a transformation $\rho$ of $S$, written as a right operator, is a \emph{right translation} if
$(st)\rho = s((t)\rho)\ (s,t\in S)$.
If $s(\lambda(t)) =((s)\rho)t$ also holds for any $s,t\in S$ then $\lambda$ and $\rho$ are \emph{linked}, and the pair 
$(\lambda,\rho)$
is called a \emph{bitranslation of $S$}.
The set $\Lambda(S)$ (resp.\ $\Rho(S)$) of all left (resp.\ right) translations of $S$ forms a submonoid in the monoid of all transformations (transformations, considered as right operators) of $S$. 
Furthermore, it is easy to verify that the set of all bitranslations of $S$ is a submonoid in the direct product $\Lambda(S)\times \Rho(S)$.
This submonoid is called the \emph{translational hull of $S$} and is denoted by $\Omega(S)$. 
The projections
\begin{equation*}
	\Upsilon_\Lambda\colon \Omega(S)\to \Lambda(S),\ (\lambda,\rho)\mapsto \lambda \quad \hbox{and}\quad  
	\Upsilon_\Rho\colon \Omega(S)\to \Rho(S),\ (\lambda,\rho)\mapsto \rho
\end{equation*}
are obviously homomorphisms. 

To reduce the number of letters and parentheses, we will use bitranslations as  `bioperators'. 
If $\omega\in \Omega(S)$ where $\omega = (\lambda,\rho)$ then we define $\omega s$ to be $\lambda(s)$ and $s\omega$ to be $(s)\rho$.
Thus the equalities in the previous paragraph have the forms
\begin{equation*}
	\omega(st) = (\omega s)t,\quad (st)\omega = s(t\omega)\quad \hbox{and}\quad
	s(\omega t) =(s\omega) t,
\end{equation*}
respectively.

Each element $s$ of $S$ defines a bitranslation $\pi_s$ by  
$\pi_s t = st,\, t\pi_s = ts\  (t\in S)$ which is called the \emph{inner bitranslation induced by} $s$.
Denote the set of all inner bitranslations by $\Pi(S)$.
It is easy to verify that we have $\omega\pi_s = \pi_{\omega s}$ and 
$\pi_s \omega = \pi_{s\omega}$ for every $s\in S$ and $\omega\in \Omega(S)$.  
Consequently, $\Pi(S)$ is an ideal in $\Omega(S)$.
Moreover, the function $\pi\colon S\to \Pi(S),\ s\mapsto \pi_s$ is a homomorphism, called the \emph{canonical homomorphism from $S$ to $\Omega(S)$}.

Now let $S$ be an inverse semigroup.
It is well known that $\Omega(S)$ is an inverse monoid, and the canonical homomorphism $\pi$ is injective, thus implying that $\Pi(S)$ is isomorphic to $S$.
The projections $\Upsilon_\Lambda$ and $\Upsilon_\Rho$ of $\Omega(S)$ into $\Lambda(S)$ and $\Rho(S)$, respectively, are also injective. 
This implies that $\Omega(S)$ is isomorphic to both $\Upsilon_\Lambda(\Omega(S))$ and  $\Upsilon_\Rho(\Omega(S))$.
The following properties will be useful in calculations:
\begin{equation*}
	\omega e = e\omega\in E(S)\quad \hbox{for every}\quad e\in E(S)\ \ \hbox{and}\ \ \omega\in E(\Omega(S))
\end{equation*}
and
\begin{equation*}
	(\omega a)^{-1} = a^{-1}\omega^{-1}\quad \hbox{for every}\quad a\in S\ \ \hbox{and}\ \ \omega\in \Omega(S).
\end{equation*}

\section
{Abstract characterization of full restricted semidirect products}
\label{sect:char-rsd}

The aim of this section is to describe, up to isomorphism, the full restricted semidirect products as being just the normal extensions which are defined by a class of congruences generalizing split Billhardt congruences.

Before turning to the main point of this section, 
we give an alternative description for 
the actions having property (AFR) which allows us to simplify later arguments.

\bigskip

\begin{Prop}\label{prop:rsd-act}
	Suppose that $K$ and  $T$ are inverse semigroups and $T$ acts on $K$ by endomorphisms.
	Let $\epsilon\colon K\to E(T)$ be an arbitrary surjective homomorphism.
	Then the action of $T$ on $K$ and the homomorphism $\epsilon$ 
	satisfy condition (\ref{conAFR}) if and only if 
	the following properties hold:
	\begin{equation}\label{AE7}
	\epsilon(a)\cdot a = a\quad \hbox{for every\/}\ a\in K,
	\end{equation} 
	\begin{equation}\label{AE8}
	\epsilon(t\cdot a) = \ran(t\epsilon(a))\quad \hbox{for every\/}\ a\in K\ \hbox{and\/}\ t\in T.
	\end{equation} 
\end{Prop}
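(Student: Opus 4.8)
The plan is to prove the two implications separately, treating $(\ref{AE7})\wedge(\ref{AE8})\Rightarrow(\ref{conAFR})$ as the routine direction and isolating the genuine work in the converse. For the routine direction I would argue as follows. Assume (\ref{AE7}) and (\ref{AE8}). If $\epsilon(a)\le e$ then $e\cdot a=e\cdot(\epsilon(a)\cdot a)=(e\epsilon(a))\cdot a=\epsilon(a)\cdot a=a$, using (\ref{AE7}) twice, the relation $e\epsilon(a)=\epsilon(a)$, and that $t\mapsto\alpha_t$ is a homomorphism. Conversely, if $e\cdot a=a$ then $\epsilon(a)=\epsilon(e\cdot a)\overset{(\ref{AE8})}{=}\ran(e\epsilon(a))=e\epsilon(a)$ (since $e,\epsilon(a)\in E(T)$ commute and $\ran$ of an idempotent is itself), whence $\epsilon(a)\le e$. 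This gives (\ref{conAFR}).

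For the converse, assume (\ref{conAFR}). Property (\ref{AE7}) is immediate on taking $e=\epsilon(a)$. The substance is (\ref{AE8}), and here only one inequality is easy. Writing $g=\ran(t\epsilon(a))=t\epsilon(a)t^{-1}$, I would first check that $g$ fixes $t\cdot a$: since $gt=t\epsilon(a)\dom(t)=t\dom(t)\epsilon(a)=t\epsilon(a)$ (commuting idempotents and $t\dom(t)=t$), one gets $g\cdot(t\cdot a)=(gt)\cdot a=(t\epsilon(a))\cdot a=t\cdot a$ by (\ref{AE7}); then (\ref{conAFR}) yields $\epsilon(t\cdot a)\le g$. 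The same computation with $t$ replaced by an idempotent $f$ gives $\epsilon(f\cdot a)\le f\epsilon(a)$ for every $f\in E(T)$.

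The hard part is the reverse inequality $g\le\epsilon(t\cdot a)$, because (\ref{conAFR}) by itself produces only \emph{upper} bounds on $\epsilon(\,\cdot\,)$ (it records which idempotents fix an element, hence only which idempotents dominate $\epsilon$), and a naive attempt to cancel in $e\cdot(t\cdot a)=t\cdot a$ keeps reducing to the very injectivity one is trying to prove. The key idea I would use to break this circularity is to exploit the surjectivity of $\epsilon$ together with the fact that $\alpha_t$ and $\epsilon$ preserve meets of idempotents. Concretely, I would first settle the idempotent case $\epsilon(f\cdot a)=f\epsilon(a)$: after reducing to $a\in E(K)$ (legitimate, since $\epsilon(f\cdot a)=\epsilon(f\cdot(a^{-1}a))$ and $\epsilon(a^{-1}a)=\epsilon(a)$), pick by surjectivity an idempotent $c\in E(K)$ with $\epsilon(c)=f\epsilon(a)$. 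Then $ac\in E(K)$ has $\epsilon(ac)=f\epsilon(a)\le f$, so (\ref{conAFR}) gives $f\cdot(ac)=ac$; applying $\epsilon$ to $(f\cdot a)(f\cdot c)=ac$ yields $\epsilon(f\cdot a)\,\epsilon(f\cdot c)=f\epsilon(a)$, and reading this in the semilattice $E(T)$ delivers the missing lower bound $f\epsilon(a)\le\epsilon(f\cdot a)$.

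Finally I would pass from the idempotent case to arbitrary $t$ by conjugation. Applying the fixing argument to $t^{-1}$ acting on $t\cdot a$ gives $\epsilon(\dom(t)\cdot a)=\epsilon(t^{-1}\cdot(t\cdot a))\le t^{-1}\epsilon(t\cdot a)t$, while the idempotent case (with $f=\dom(t)$) evaluates the left side as $\dom(t)\epsilon(a)$; conjugating the inequality $\dom(t)\epsilon(a)\le t^{-1}\epsilon(t\cdot a)t$ by $t$ (which is order-preserving on idempotents below $\dom(t)$, noting $\epsilon(t\cdot a)\le g\le\ran(t)$) returns $g=t\epsilon(a)t^{-1}\le\epsilon(t\cdot a)$. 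Together with the easy inequality this proves (\ref{AE8}). I expect the manufacture of the idempotent witness $c$ via surjectivity of $\epsilon$ — and the recognition that it is exactly what converts the one-sided information in (\ref{conAFR}) into a genuine lower bound — to be the crucial and least obvious step.
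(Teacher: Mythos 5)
Your proof is correct; I checked each step and found no gaps. The easy direction and the derivation of (\ref{AE7}) coincide with the paper's. For (\ref{AE8}) the overall strategy is also the paper's --- first settle the case of an idempotent acting, using surjectivity of $\epsilon$ to manufacture an auxiliary element, then bootstrap to general $t$ --- but both halves are executed differently. For the idempotent case the paper proves the symmetric identity $\epsilon(\epsilon(a)\cdot b)=\epsilon(ab)=\epsilon(\epsilon(b)\cdot a)$ for all $a,b\in K$, starting from the factorization $ab=(\epsilon(b)\cdot a)(\epsilon(a)\cdot b)$ and then showing that $\epsilon(b)\cdot a$ is fixed by $\epsilon(ab)$; specializing $b$ to a preimage of $e$ gives $\epsilon(e\cdot a)=e\epsilon(a)$. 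Your choice of $c$ with $\epsilon(c)=f\epsilon(a)$ and the observation that $f$ fixes $ac$ is a more targeted variant of the same trick (and your preliminary reduction to $a\in E(K)$ is actually unnecessary --- the argument works verbatim for arbitrary $a$ and $c$, since a product in the semilattice $E(T)$ is always below each factor). The real divergence is in the passage to general $t$: the paper replaces $a$ by $a'=\dom(t)\cdot a$, notes $t\cdot a=t\cdot a'$ and $\epsilon(a')\le\dom(t)$, and then invokes Lawson's Lemma 5.3.8(1) as a black box, whereas your conjugation argument (applying the easy inequality to $t^{-1}$ acting on $t\cdot a$, evaluating $\epsilon(\dom(t)\cdot a)$ by the idempotent case, and conjugating back by $t$, which is legitimate since the relevant idempotents lie below $\dom(t)$ and $\ran(t)$ respectively) is entirely self-contained. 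That makes your proof slightly longer but independent of the external reference, which is a genuine advantage if one wants the proposition to stand on its own.
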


\begin{proof}
	First assume that (\ref{AE7}) and (\ref{AE8}) are fulfilled, and let $e\in E(T)$ and $a\in K$.
	If $e\cdot a = a$ then (\ref{AE8}) implies that 
	$\epsilon(a) = \epsilon(e\cdot a) = e\epsilon(a)$
	whence $\epsilon(a)\le e$.
	Conversely, if  $\epsilon(a)\le e$ then it follows by (\ref{AE7}) that
	\begin{equation*}
		a = \epsilon(a)\cdot a = (e\epsilon(a))\cdot a = e\cdot (\epsilon(a)\cdot a) = e\cdot a.
	\end{equation*}
	Thus we have shown that (\ref{conAFR}) holds.
	
	Now suppose that (\ref{conAFR}) is satisfied.
	Clearly, (\ref{conAFR}) implies (\ref{AE7}), thus, in order to prove the `only if' part of the statement, it suffices to check that property (\ref{AE8}) also holds.
	We start the argument with verifying the equalities
	\begin{equation}
	\label{equ:rsd-act}
	\epsilon(\epsilon(a)\cdot b) = \epsilon(ab) = \epsilon(\epsilon(b)\cdot a)
	\quad \hbox{for every}\quad a,b\in K.
	\end{equation}
	Consider arbitrary elements $a,b\in K$.
	Then we see by (\ref{conAFR}) that
	\begin{eqnarray*}
		ab & = & \epsilon(ab)\cdot ab = (\epsilon(a)\epsilon(b))\cdot ab \cr
		   & = & \big(\epsilon(a)\epsilon(b)\cdot a\big)
		\big(\epsilon(a)\epsilon(b)\cdot b\big) = (\epsilon(b)\cdot a)(\epsilon(a)\cdot b)
	\end{eqnarray*}
	which implies that
	\begin{equation*}
		\epsilon(ab)
		= \epsilon(\epsilon(b)\cdot a)\epsilon(\epsilon(a)\cdot b)
		\le \epsilon(\epsilon(b)\cdot a),\ \epsilon(\epsilon(a)\cdot b).
	\end{equation*}
	On the other hand, we have
	\begin{equation*}
	\epsilon(b)\cdot a = \epsilon(b)\cdot (\epsilon(a)\cdot a) 
	= \epsilon(a)\epsilon(b)\cdot a 
	= (\epsilon(a)\epsilon(b))\epsilon(b) \cdot a 
	= \epsilon(ab)\cdot (\epsilon(b)\cdot a) 
	\end{equation*}
	whence we obtain $\epsilon(ab) \ge \epsilon(\epsilon(b)\cdot a)$ by (\ref{conAFR}), and the inequality
	$\epsilon(ab) \ge \epsilon(\epsilon(a)\cdot b)$ is seen in a similar way.
	This verifies (\ref{equ:rsd-act}).
	Since $\epsilon$ is surjective, for any $e\in E(T)$, we have $b\in K$ such that $e = \epsilon(b)$.
	Applying (\ref{equ:rsd-act}), we obtain that $\epsilon(e\cdot a) = \epsilon(\epsilon(b)\cdot a) = \epsilon(b)\epsilon(a) = e\epsilon(a)$,
	that is, we have
	\begin{equation}
	\label{equ:rsd-actspec}
	\epsilon(e\cdot a) = e\epsilon(a)\quad \hbox{for every}\quad 
	a\in K,\ e\in E(T).
	\end{equation}
	If $a\in K$ and $t\in T$ are arbitrary elements then the equality
	$t\cdot a = t\cdot a'$ is valid for the element $a'= \dom(t)\cdot a$ whence
	$\dom(t)\cdot a' = a'$ is clear, and we have
	$\epsilon(a') = \dom(t) \epsilon(a)$ by (\ref{equ:rsd-actspec}).
	Thus \cite[Lemma 5.3.8(1)]{LawInvSg} implies that
	\begin{equation*}
		\epsilon(t\cdot a) = \epsilon(t\cdot a') = \ran(t\epsilon(a')) = 
		\ran(t\dom(t)\epsilon(a)) = \ran(t\epsilon(a)),
	\end{equation*}
	and this completes the proof of (\ref{AE8}).
\end{proof}

\smallskip

\begin{Rem}
	{\rm 
	Notice that  (\ref{AE8}) extends property \cite[Lemma 5.3.8(1)]{LawInvSg} to arbitrary elements of $K$ and $T$,
	and by making use of (\ref{AE7}) and (\ref{AE8}), the proof of 
	\cite[Theorem 5.3.5]{LawInvSg} can be simplified.
	}		
\end{Rem}

\bigskip

\begin{Rem} \label{rem:conAE78}
	{\rm 
	Suppose that $K$, $T$ and $\epsilon$ satisfy the assumptions of Proposition \ref{prop:rsd-act}.
	If the semilattice decomposition defined by $\epsilon$ is 
	$K = \bigcup_{e\in E(T)} K_e$ then properties (\ref{AE7}) and (\ref{AE8}) are equivalent to the following two}
	\begin{eqnarray} 
	\label{conAE7mod}
	&\quad & e\cdot a = a\quad \hbox{\rm for every}\ e\in E(T)\ \hbox{\rm and}\ a\in K_e \\
	\label{conAE8mod}
	&\quad & t\cdot a \in K_{\ran(te)}\quad \hbox{\rm for every}\ e\in E(T),\ a\in K_e\  \hbox{\rm and}\ t\in T,
	\end{eqnarray} 
	{\rm respectively.
	Moreover, the underlying set of the respective full restricted semidirect product is}
	\begin{equation*}
	\rsd KT = \{(a,t)\in K\times T: a\in K_{\ran(t)}\}.
	\end{equation*}
\end{Rem}

Consider a surjective homomorphism $\epsilon\colon K\to E$ from an inverse semigroup $K$ to a semilattice $E$, and let the respective semilattice decomposition be $K = \bigcup_{e\in E} K_e$.
It is routine to verify that if $E$ acts on $K$ by endomorphisms such $\epsilon$ satisfies (\ref{conAFR}) then $K$ is a strong semilattice $E$ of its inverse subsemigroups $K_e\ (e\in E)$ with structure homomorphisms 
$\varepsilon_{e,f}\colon K_e\to K_f\ (f\le e)$
which are given by the equalities
\begin{equation*}
	f\cdot a = \varepsilon_{e,f}(a)\quad (e,f\in E\ \hbox{with}\ e\ge f\ \hbox{and}\ a\in K_e).
\end{equation*}
This implies the important consequence of Proposition \ref{prop:rsd-act}, more precisely, of (\ref{equ:rsd-actspec}), formulated in Corollary \ref{cor:act-strongsl}.
It is also worth noticing that, conversely, if $K$ is a strong semilattice of its inverse subsemigroups $K_e\ (e\in E)$ then $E$ acts on $K$ by endomorphisms  in a way that condition (\ref{conAFR}) is fulfilled by the homomorphism $\epsilon\colon K\to E$ corresponding to the semilattice decomposition of $K$.
Namely, if the family of structure homomorphisms in $K$ is 
$\varepsilon_{e,f}\ (e,f\in E,\ f\le e)$ then the appropriate action is defined as follows:
$f\cdot a = \varepsilon_{f,ef}(a)$ for every $e,f\in E$ and $a\in K_e$.

\bigskip

\begin{Cor}\label{cor:act-strongsl}
	If $K$ and  $T$ are inverse semigroups and $T$ acts on $K$ by endomorphisms such that axiom {\rm (AFR)} holds then $K$ is a strong semilattice of its inverse subsemigroups $K_e\ (e\in E(T))$.
\end{Cor}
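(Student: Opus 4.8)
The plan is to verify directly the axioms defining a strong semilattice of inverse semigroups, taking as connecting maps exactly those prescribed in the discussion preceding the statement. First I would reduce to a semilattice action: since $t\mapsto \alpha_t$ is a homomorphism $T\to \End K$, its restriction to the subsemilattice $E(T)$ endows $K$ with an action of $E(T)$ by endomorphisms, and axiom (AFR) furnishes the surjective homomorphism $\epsilon\colon K\to E(T)$ satisfying (\ref{conAFR}). Writing $K=\bigcup_{e\in E(T)}K_e$ for the associated semilattice decomposition, each $K_e$ is an inverse subsemigroup, being a class of $\ker\epsilon$. For $e\ge f$ in $E(T)$ and $a\in K_e$ I would then set $\varepsilon_{e,f}(a)=f\cdot a$ and check that the family $(\varepsilon_{e,f})_{f\le e}$ is a system of structure homomorphisms producing $K$.

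The crucial point, and the only place where Proposition \ref{prop:rsd-act} does real work, is that $\varepsilon_{e,f}$ actually takes its values in $K_f$. This is exactly (\ref{equ:rsd-actspec}): for $a\in K_e$ with $f\le e$ one gets $\epsilon(f\cdot a)=f\epsilon(a)=fe=f$, so $f\cdot a\in K_f$. That each $\varepsilon_{e,f}$ is a homomorphism is then immediate, since $a\mapsto f\cdot a$ is an endomorphism of $K$ and restricts to one on $K_e$. The two strong-semilattice axioms are now formal. The identity law $\varepsilon_{e,e}=\mathrm{id}_{K_e}$ is (\ref{AE7}), that is, $e\cdot a=a$ for $a\in K_e$. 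For the composition law, given $e\ge f\ge g$ and $a\in K_e$, the fact that $t\mapsto \alpha_t$ is a homomorphism yields $g\cdot(f\cdot a)=(gf)\cdot a=g\cdot a$, which says precisely that $\varepsilon_{f,g}\,\varepsilon_{e,f}=\varepsilon_{e,g}$.

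It remains to recover the multiplication of $K$ from the strong-semilattice rule. For $a\in K_e$ and $b\in K_f$ we have $\epsilon(ab)=\epsilon(a)\epsilon(b)=ef$, so by (\ref{AE7}) and the endomorphism property of $a\mapsto (ef)\cdot a$,
\begin{equation*}
ab=(ef)\cdot(ab)=\big((ef)\cdot a\big)\big((ef)\cdot b\big)=\varepsilon_{e,ef}(a)\,\varepsilon_{f,ef}(b),
\end{equation*}
which is exactly the product in a strong semilattice of the $K_e$. Thus the only genuine obstacle is the landing property $f\cdot a\in K_f$; once this is secured by (\ref{equ:rsd-actspec}), every remaining step is a direct consequence of the action being by endomorphisms together with (\ref{AE7}).
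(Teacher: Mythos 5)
Your proof is correct and follows exactly the route the paper takes (and leaves as ``routine''): restrict the action to $E(T)$, define $\varepsilon_{e,f}(a)=f\cdot a$, and use $\epsilon(f\cdot a)=f\epsilon(a)$ from (\ref{equ:rsd-actspec}) as the key landing property, with (\ref{AE7}) and the endomorphism/homomorphism properties of the action supplying the remaining strong-semilattice axioms and the recovery of the multiplication.
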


\medskip

Now we turn our attention to the main objective of the section.
First we introduce the concept of the translational hull of a normal extension and several notions and notation related to it.

Let $(S,\theta)$ be a normal extension and let $K = \Ker \theta$.
We say that a bitranslation \emph{$\omega\in \Omega(S)$ respects the congruence\/ $\theta$} if we have 
$\omega s\rtheta \omega s'$ and $s\omega\rtheta s'\omega$ 
for every $s,s'\in S$ with $s\rtheta s'$.
It is routine to check that the set of all bitranslations of $S$ respecting $\theta$ forms an inverse submonoid $\Omega_\theta(S)$ in $\Omega(S)$.
It is worth mentioning that $\Pi(S)$ is contained in $\Omega_\theta(S)$, and since $\Pi(S)$ is an ideal in $\Omega(S)$, it is an ideal also in $\Omega_\theta(S)$.

Notice that each $\omega\in \Omega_\theta(S)$ induces a bitranslation 
$\omega^\downharpoonright$ 
on the factor semigroup $S/\theta$ in a natural way:
\begin{equation}
\label{equ:harp}
\omega^\downharpoonright(\theta(s)) = \theta(\omega s) \quad
	\hbox{and} \quad
(\theta(s))\omega^\downharpoonright = \theta(s\omega) \quad
	\hbox{for any}\ s\in S.
\end{equation}
It is easy to verify that the function 
$()^\downharpoonright\colon \Omega_\theta(S)\to \Omega(S/\theta),\ 
\omega\mapsto \omega^\downharpoonright$ is a homomorphism.
Actually, $\Omega_\theta(S)$ consists just of the bitranslations of $S$ for which rule (\ref{equ:harp}) defines a bitranslation of $S/\theta$.
In particular, since $()^\downharpoonright$ is a homomorphism,
\begin{equation*}
\Omega(S,\theta) = \{\omega\in \Omega_\theta(S): 
\omega^\downharpoonright\in \Pi(S/\theta)\}
\end{equation*}
is an inverse subsemigroup of $\Omega_\theta(S)$ containing $\Pi(S)$ as an ideal.
We call $\Omega(S,\theta)$ the \emph{translational hull of the normal extension $(S,\theta)$}.
For simplicity, the restriction of the homomorphism $()^\downharpoonright$ to $\Omega(S,\theta)$ is also denoted by $()^\downharpoonright$.
The congruence on $\Omega(S,\theta)$ induced by 
$()^\downharpoonright$ is the relation $\Omega(\theta)$ given in the following way:
\begin{eqnarray}
	\label{Omtheta}
	\lefteqn{\omega \mathrel{\Omega(\theta)} \omega' \quad 
	\hbox{if and only if}} \cr 
	& & \qquad\quad\ \; \omega s \mathrel{\theta} \omega' s \ \  \hbox{and} 
	\ \  s \omega \mathrel{\theta} s \omega' \quad 
	\hbox{for every} \ \  s\in S \ \  (\omega, \omega'\in \Omega(S,\theta)).
\end{eqnarray}
Its restriction to $\Pi(S)$ is denoted by $\Pi(\theta)$.
The properties of the translational hull of a normal extension are summarized in the following proposition.

\bigskip

\begin{Prop} \label{prop:trhull-next}
	Let $(S,\theta)$ be a normal extension.
	\begin{enumerate}
		\item 
		\label{prop:trhull-next1}
		The function
		$()^\downharpoonright\colon \Omega(S,\theta)\to \Pi(S/\theta),\ 
		\omega\mapsto \omega^\downharpoonright$
		defined by (\ref{equ:harp}) is a surjective homomorphism, and its kernel is the congruence $\Omega(\theta)$ given in (\ref{Omtheta}). 
		\item 
		\label{prop:trhull-next2}
		The canonical homomorphism $\pi \colon S\to \Omega(S)$ embeds the normal extension $(S,\theta)$ into the normal extension $(\Omega(S,\theta),\Omega(\theta))$ such that
		\begin{equation*}
			\iota\colon S/\theta \to \Omega(S,\theta)/\Omega(\theta),\ 
			\theta(s) \mapsto \Omega(\theta)(\pi_s)
		\end{equation*}
		is an isomorphism.
	\end{enumerate}
\end{Prop}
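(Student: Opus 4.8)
The plan is to treat the two parts in turn, relying on three facts recorded before the statement: that $()^\downharpoonright\colon \Omega_\theta(S)\to \Omega(S/\theta)$ is a homomorphism, that the image of $\Omega(S,\theta)$ under it lies in $\Pi(S/\theta)$ (this is the defining property of $\Omega(S,\theta)$), and that the canonical homomorphism of an inverse semigroup into its translational hull is injective with image the ideal of inner bitranslations. I will also use that $(\Omega(S,\theta),\Omega(\theta))$ is itself a normal extension, which holds since $\Omega(\theta)$ is a congruence on the inverse semigroup $\Omega(S,\theta)$.

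For part (\ref{prop:trhull-next1}), the homomorphism property of $()^\downharpoonright$ restricted to $\Omega(S,\theta)$ is inherited from $\Omega_\theta(S)$, and by definition of $\Omega(S,\theta)$ its codomain may be taken to be $\Pi(S/\theta)$; so the content is surjectivity. The key computation is $\pi_s^\downharpoonright = \pi_{\theta(s)}$: indeed $\pi_s^\downharpoonright(\theta(t)) = \theta(\pi_s t) = \theta(st) = \theta(s)\theta(t) = \pi_{\theta(s)}(\theta(t))$, and symmetrically on the right, which also shows $\pi_s\in \Omega(S,\theta)$. As every element of $\Pi(S/\theta)$ has the form $\pi_{\theta(s)}$ for some $s\in S$, surjectivity follows. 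That the induced kernel congruence is $\Omega(\theta)$ is then immediate, since unwinding $\omega^\downharpoonright = \omega'^\downharpoonright$ through (\ref{equ:harp}) reads $\theta(\omega s) = \theta(\omega' s)$ and $\theta(s\omega) = \theta(s\omega')$ for every $s\in S$, which is exactly (\ref{Omtheta}).

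For part (\ref{prop:trhull-next2}), I would invoke the criterion for an embedding of normal extensions recorded in Section \ref{sect:prelim}. Since $\pi$ is injective, it remains to check that $\pi(\Ker\theta)\subseteq \Ker\Omega(\theta)$ and that $s\rtheta s'$ holds if and only if $\pi_s \mathrel{\Omega(\theta)} \pi_{s'}$. The biconditional follows at once from part (\ref{prop:trhull-next1}) together with injectivity of the canonical homomorphism of $S/\theta$: $\pi_s \mathrel{\Omega(\theta)} \pi_{s'}$ means $\pi_{\theta(s)} = \pi_{\theta(s')}$, i.e.\ $\theta(s) = \theta(s')$. For the Kernel inclusion, if $a\in \Ker\theta$ then $\theta(a)$ is idempotent, so $\pi_a^\downharpoonright = \pi_{\theta(a)}$ is an idempotent of $\Pi(S/\theta)$; as $()^\downharpoonright$ is a homomorphism onto $\Pi(S/\theta)$ with kernel $\Omega(\theta)$, this places $\pi_a$ in $\Ker\Omega(\theta)$.

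Finally, $\iota$ is well defined and injective by the biconditional just established, and it is a homomorphism because $\pi$ and the natural maps are. It is surjective by the surjectivity proved in part (\ref{prop:trhull-next1}): any $\omega\in \Omega(S,\theta)$ satisfies $\omega^\downharpoonright = \pi_{\theta(s)} = \pi_s^\downharpoonright$ for a suitable $s\in S$, whence $\omega \mathrel{\Omega(\theta)} \pi_s$ and $\Omega(\theta)(\omega) = \iota(\theta(s))$. Conceptually $\iota$ is simply the isomorphism given by the first isomorphism theorem for $()^\downharpoonright$, under the identification $\Pi(S/\theta)\cong S/\theta$. I do not anticipate a genuine obstacle; the only care needed is to apply the correct embedding criterion and to keep the two canonical maps---into $\Omega(S)$ and into $\Omega(S/\theta)$---clearly distinct.
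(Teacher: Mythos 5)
Your proof is correct. The paper actually states this proposition without proof (it is offered as a summary of routine facts), and your argument supplies exactly the verification the surrounding text relies on: the identity $\pi_s^\downharpoonright = \pi_{\theta(s)}$ giving surjectivity and the containment $\Pi(S)\subseteq\Omega(S,\theta)$, the unwinding of (\ref{equ:harp}) identifying the kernel with $\Omega(\theta)$, and the first isomorphism theorem together with the paper's embedding criterion for part (\ref{prop:trhull-next2}).
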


Let $K,T$ be inverse semigroups, and let $T$ act on $K$ such that (AFR) is satisfied. 
Consider the full restricted semidirect product $\bbS = \rsd KT$ defined by them.
For every $t\in T$, let us introduce a bioperator $\omega_{[t]}$ on $\bbS$ as follows: 
\begin{equation*}
\omega_{[t]}(x,u) = (t\cdot x,tu)\quad \hbox{and} \quad
(x,u)\omega_{[t]} = (\ran(ut)\cdot x,ut)\quad
((x,u)\in \bbS).
\end{equation*}
First of all, we establish that $\omega_{[t]}\in \Omega(\bbS,\Theta)$ where $\Theta$ is the congruence induced on $\bbS$ by  the second projection.
Since $\bbS/\Theta$ is isomorphic to $T$ we consider the homomorphism $()^\downharpoonright$ to be a function into $\Omega(T)$ rather than into $\Omega(\bbS/\Theta)$,
and so bitranslations of $T$ will occur in the arguments.
To avoid confusion we will distinguish them from bitranslations of $\bbS$ by means of a superscript $T$.

\bigskip

\begin{Lem}
\label{lem:lambda-t-trans}
For any $t\in T$, we have $\omega_{[t]}\in \Omega(\bbS,\Theta)$ where $\omega^\downharpoonright_{[t]}=\omega^T_t$.
\end{Lem}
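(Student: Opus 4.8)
The plan is to verify directly that, for each $t\in T$, the two formulas defining $\omega_{[t]}$ yield a genuine bitranslation of $\bbS$ lying in $\Omega_\Theta(\bbS)$, and then to compute $\omega_{[t]}^\downharpoonright$ and recognise it as the inner bitranslation $\omega^T_t$ of $T$; since $\omega^T_t\in\Pi(T)\cong\Pi(\bbS/\Theta)$, membership in $\Omega(\bbS,\Theta)$ follows from the definition of that semigroup. I would begin by checking that the formulas land in $\bbS$. For $(x,u)\in\bbS$ we have $\epsilon(x)=\ran(u)$, so $(\ref{AE8})$ gives $\epsilon(t\cdot x)=\ran(t\epsilon(x))=\ran(t\ran(u))=\ran(tu)$, and likewise $\epsilon(\ran(ut)\cdot x)=\ran(\ran(ut)\ran(u))=\ran(ut)$ because $\ran(ut)=u\ran(t)u^{-1}\le\ran(u)$. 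Hence $\omega_{[t]}(x,u)$ and $(x,u)\omega_{[t]}$ are indeed elements of $\bbS$.

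That the left component is a left translation is a routine computation using that the action is a homomorphism $T\to\End K$ and $t\cdot(u\cdot y)=(tu)\cdot y$: both $\omega_{[t]}\bigl((x,u)(y,v)\bigr)$ and $\bigl(\omega_{[t]}(x,u)\bigr)(y,v)$ equal $\bigl((t\cdot x)((tu)\cdot y),\,tuv\bigr)$. For the linking condition one checks that $(x,u)\bigl(\omega_{[t]}(y,v)\bigr)$ and $\bigl((x,u)\omega_{[t]}\bigr)(y,v)$ share the second coordinate $utv$ and have first coordinates $x\bigl((ut)\cdot y\bigr)$ and $(\ran(ut)\cdot x)\bigl((ut)\cdot y\bigr)$. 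These agree because $\epsilon\bigl((ut)\cdot y\bigr)=\ran(utv)\le\ran(ut)\le\ran(u)=\epsilon(x)$, so the product identity $ab=(\epsilon(b)\cdot a)(\epsilon(a)\cdot b)$ established in the proof of Proposition~\ref{prop:rsd-act} rewrites both as $(\ran(utv)\cdot x)\bigl((ut)\cdot y\bigr)$.

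The step requiring the most care, and the one I expect to be the main obstacle, is showing that the right component is a right translation, i.e.\ that $\ran(uvt)\cdot\bigl(x(u\cdot y)\bigr)=x\bigl((u\ran(vt))\cdot y\bigr)$. Expanding the left-hand side by the homomorphism property of the action gives $(\ran(uvt)\cdot x)\bigl((\ran(uvt)u)\cdot y\bigr)$, while the product identity rewrites the right-hand side as $(\ran(uvt)\cdot x)\bigl((u\ran(vt))\cdot y\bigr)$ (using $\epsilon\bigl((u\ran(vt))\cdot y\bigr)=\ran(uvt)\le\ran(u)$). Thus everything reduces to $(\ran(uvt)u)\cdot y=(u\ran(vt))\cdot y$, and the crucial observation is that already in $T$ one has $\ran(uvt)u=u\ran(vt)$: indeed $\ran(ab)a=a\ran(b)$ holds in every inverse semigroup, since $\ran(ab)a=abb^{-1}a^{-1}a=a(bb^{-1})(a^{-1}a)=a(a^{-1}a)(bb^{-1})=a\ran(b)$, where idempotents commute. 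Applying this with $a=u$ and $b=vt$ settles the identity. (Alternatively, Corollary~\ref{cor:act-strongsl} allows one to phrase all these manipulations in terms of the strong semilattice structure of $K$.)

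Finally, $\omega_{[t]}$ respects $\Theta=\ker\pi_2$ trivially, because the second coordinates of $\omega_{[t]}(x,u)$ and $(x,u)\omega_{[t]}$ are $tu$ and $ut$, depending only on $u$; hence $\omega_{[t]}\in\Omega_\Theta(\bbS)$. Identifying $\bbS/\Theta$ with $T$ via the second projection, the induced bitranslation $\omega_{[t]}^\downharpoonright$ from $(\ref{equ:harp})$ sends the class of $(x,u)$ to $tu$ on the left and to $ut$ on the right, so $\omega_{[t]}^\downharpoonright=\omega^T_t$, the inner bitranslation of $T$ induced by $t$. As $\omega^T_t\in\Pi(T)\cong\Pi(\bbS/\Theta)$, the definition of $\Omega(\bbS,\Theta)$ yields $\omega_{[t]}\in\Omega(\bbS,\Theta)$, completing the proof.
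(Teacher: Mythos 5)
Your proof is correct and follows essentially the same route as the paper's: a direct verification that $\omega_{[t]}$ is a left translation, a right translation, and that the two are linked, with the $\Theta$-compatibility and the identity $\omega_{[t]}^\downharpoonright=\omega^T_t$ read off trivially from the second coordinates $tu$ and $ut$. The only (harmless) differences are stylistic: you absorb the idempotents via the identity $ab=(\epsilon(b)\cdot a)(\epsilon(a)\cdot b)$ and the inverse-semigroup identity $\ran(ab)a=a\ran(b)$, where the paper computes directly with the classes $K_e$, and you additionally make explicit the check that the formulas land in $\bbS$, which the paper leaves implicit.
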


\begin{proof}
	It suffices to verify that $\omega_{[t]}\in \Omega(\bbS)$ since, by definition, the second components of $\omega_{[t]}(x,u)$ 
	and $(x,u)\omega_{[t]}$ are $tu = \omega^T_t u$ and $ut = u\omega^T_t$, respectively, for every $x\in K_{\ran(u)}$ whence
	$\omega_{[t]}\in \Omega(\bbS,\Theta)$ and
	$\omega^\downharpoonright_{[t]}=\omega^T_t$ follow.
	Let $(x,u), (y,v)\in \bbS$ be arbitrary elements. 
	Then
	\begin{eqnarray*}
	\omega_{[t]}\left((x,u)(y,v)\right) 
	& = & \omega_{[t]}\left(x(u\cdot y),uv\right)
	= \left(t\cdot (x(u\cdot y)),tuv\right) \cr
	& = & \left((t\cdot x)(tu\cdot y),tuv\right) 
	= (t\cdot x,tu)(y,v)
	= \left(\omega_{[t]}(x,u)\right)(y,v)
	\end{eqnarray*}
	whence we see that $\omega_{[t]}$, as a left operator, is a left translation of $\bbS$.
	Similarly, we obtain that
	\begin{eqnarray*}
	\left((x,u)(y,v)\right)\omega_{[t]}
	& = & \left(x(u\cdot y),uv)\right)\omega_{[t]}
	= \left(\ran(uvt)\cdot (x(u\cdot y)),uvt\right) \cr
	& = & \left(x(\ran(uvt)u\cdot y),uvt\right)
	= \left(x(u\ran(vt)\cdot y),uvt\right) \cr
	& = & (x,u)\left(\ran(vt)\cdot y,vt\right)
	= (x,u)\left((y,v)\omega_{[t]}\right)
	\end{eqnarray*}
	where the third equality is implied by the facts that $e = \ran(uvt)\in E(T)$
	and $e\cdot (x(u\cdot y)) = (e\cdot x)(e^2u\cdot y) = e\cdot (x(eu\cdot y)) =
	x(eu\cdot y)$ since $x(eu\cdot y)\in K_{\ran(u)}K_{\ran(euv)}\subseteq K_e$. 
	Thus $\omega_{[t]}$, as a right operator, is a right translation. 
	Finally, we verify that they are linked. 
	Indeed,
	\begin{eqnarray*}
	\left((x,u)\omega_{[t]}\right)(y,v)
	& = & (\ran(ut)\cdot x,ut)(y,v)
	= ((\ran(ut)\cdot x)(ut\cdot y),utv) \cr
	& = & \left(\ran(ut)\cdot(x(ut\cdot y)),utv\right)
	= (x(ut\cdot y),utv)
	\end{eqnarray*}
	since $x(ut\cdot y)\in K_{\ran(u)}K_{\ran(utv)}\subseteq K_{\ran(utv)}$ where $\ran(utv)\le \ran(ut)$, and
	\begin{equation*}
	(x(ut\cdot y),utv) = (x,u)(t\cdot y,tv)
	= (x,u)\left(\omega_{[t]}(y,v)\right).
	\end{equation*}
	\end{proof}

Let us introduce the function 
$\omega_{[\;]}\colon T\to \Omega(\bbS,\Theta),\ t\mapsto \omega_{[t]}$.

\bigskip

\begin{Lem}
\label{lem:hom-lambda}
	The function $\omega_{[\;]}$ is an injective homomorphism such that 
	$\omega_{[t]} \mathrel{\Omega(\Theta)} \omega_{(a,t)}$ for every
	$(a,t)\in \bbS$.
\end{Lem}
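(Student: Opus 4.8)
The plan is to prove the three assertions separately, leaning throughout on Lemma \ref{lem:lambda-t-trans}, which already places each $\omega_{[t]}$ in $\Omega(\bbS,\Theta)$ and records the identity $\omega^\downharpoonright_{[t]}=\omega^T_t$. In particular the codomain $\Omega(\bbS,\Theta)$ of $\omega_{[\;]}$ is legitimate, and the maps $\omega_{[\;]}$, $()^\downharpoonright\circ\omega_{[\;]}$ and $t\mapsto\omega^T_t$ are all available.

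For the homomorphism property I would avoid computing with right translations altogether. Since the projection $\Upsilon_\Lambda\colon\Omega(\bbS)\to\Lambda(\bbS)$ is injective, it suffices to check that $\omega_{[t]}\omega_{[s]}$ and $\omega_{[ts]}$ agree as left operators, and this is immediate: for every $(x,u)\in\bbS$,
\[
(\omega_{[t]}\omega_{[s]})(x,u)=\omega_{[t]}(s\cdot x,su)=\bigl(t\cdot(s\cdot x),t(su)\bigr)=\bigl((ts)\cdot x,(ts)u\bigr)=\omega_{[ts]}(x,u),
\]
where only the fact that $T\to\End K$ is a homomorphism, i.e.\ $t\cdot(s\cdot x)=(ts)\cdot x$, is used. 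Injectivity of $\Upsilon_\Lambda$ then forces $\omega_{[t]}\omega_{[s]}=\omega_{[ts]}$, so $\omega_{[\;]}$ is a homomorphism. (If one insists on verifying the right translations directly instead, the only non-formal point is the inequality $\ran(uts)\le\ran(ut)$, which holds because $\ran(uts)=(ut)\ran(s)(ut)^{-1}\le(ut)(ut)^{-1}=\ran(ut)$; it guarantees $(\ran(uts)\ran(ut))\cdot x=\ran(uts)\cdot x$.)

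Injectivity I would read off from the factorization through $T$. By Lemma \ref{lem:lambda-t-trans} the composite $()^\downharpoonright\circ\omega_{[\;]}$ sends $t$ to $\omega^\downharpoonright_{[t]}=\omega^T_t$, hence it is exactly the canonical homomorphism $T\to\Omega(T)$, $t\mapsto\omega^T_t$. As $T$ is an inverse semigroup this canonical homomorphism is injective, and a map whose composite with $()^\downharpoonright$ is injective is itself injective; therefore $\omega_{[\;]}$ is injective.

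For the congruence relation I would appeal directly to the description (\ref{Omtheta}) of $\Omega(\Theta)$, noting first that $\omega_{(a,t)}\in\Omega(\bbS,\Theta)$ because $\Pi(\bbS)$ is contained in $\Omega(\bbS,\Theta)$, so both bitranslations lie in the domain of $\Omega(\Theta)$. Since $\Theta=\ker\pi_2$ identifies two elements of $\bbS$ precisely when their second components coincide, it is enough to compare second components. For every $(x,u)\in\bbS$ the left products $\omega_{[t]}(x,u)=(t\cdot x,tu)$ and $\omega_{(a,t)}(x,u)=(a,t)(x,u)=(a(t\cdot x),tu)$ both have second component $tu$, while the right products $(x,u)\omega_{[t]}=(\ran(ut)\cdot x,ut)$ and $(x,u)\omega_{(a,t)}=(x,u)(a,t)=(x(u\cdot a),ut)$ both have second component $ut$. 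Thus $\omega_{[t]}s\mathrel{\Theta}\omega_{(a,t)}s$ and $s\omega_{[t]}\mathrel{\Theta}s\omega_{(a,t)}$ for every $s\in\bbS$, which is exactly $\omega_{[t]}\mathrel{\Omega(\Theta)}\omega_{(a,t)}$. The only step requiring an actual idea is the homomorphism property, and the reduction to left operators via injectivity of $\Upsilon_\Lambda$ disposes of it cleanly; the remaining two claims are bookkeeping with second components and with the already-established identity $\omega^\downharpoonright_{[t]}=\omega^T_t$.
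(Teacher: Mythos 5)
Your proof is correct and follows essentially the same route as the paper: the homomorphism property by direct computation, and both injectivity and the relation $\omega_{[t]} \mathrel{\Omega(\Theta)} \omega_{(a,t)}$ from the identity $\omega^\downharpoonright_{[t]}=\omega^T_t$ of Lemma \ref{lem:lambda-t-trans} (your comparison of second components is exactly what $\Omega(\Theta)$ amounts to here, since $\Theta=\ker\pi_2$). The only cosmetic difference is that the paper also verifies the right-translation identity $(x,v)\omega_{[t]}\omega_{[u]}=(x,v)\omega_{[tu]}$ directly, whereas you dispose of it via the injectivity of $\Upsilon_\Lambda$; both are legitimate.
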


\begin{proof}
	Let $t,u\in T$ and $(x,v)\in \bbS$.
	It is straightforward by definition that
	$\omega_{[t]}\omega_{[u]}(x,v) = \omega_{[tu]}(x,v)$ and 
	$(x,v)\omega_{[t]}\omega_{[u]} = (x,v)\omega_{[tu]}$
	whence $\omega_{[\;]}$ is, indeed, a homomorphism.
	Since $\omega^\downharpoonright_{(a,t)} = \omega^T_t$ for every $t\in T$,
	Lemma \ref{lem:lambda-t-trans} implies that $\omega_{[\;]}$ is injective, and
	$\omega_{(a,t)} \mathrel{\Omega(\Theta)} \omega_{[t]}$ for all 
	$(a,t)\in \bbS$.
\end{proof}

In the next lemma we formulate further properties of $\omega_{[t]}\ (t\in T)$.
Recall (\ref{equ:Ker-rsd}) and Remark \ref{rem:act-on-bbK} for the Kernel $\bbK$ of $\Theta$ and for the action of $T$ induced on $\bbK$, respectively.

\bigskip

\begin{Lem}
\label{lem:lambda-t}
For every elements $t\in T$, $e\in E(T)$, $(a,t)\in \bbS$ and $(c,e)\in \bbK_e$, we have
	\begin{enumerate}
		\item 
		\label{lem:lambda-t:np-order}
			$\omega^{-1}_{[t]}\omega_{[t]} = \omega_{[\dom(t)]} \ge \omega_{(t^{-1}\cdot \dom(a),\dom(t))} = \omega_{\dom(a,t)} = 
			\omega^{-1}_{(a,t)}\omega_{(a,t)}$,
		\item 
		\label{lem:lambda-t:act-vs-trans}
			$t\cdot (c,e) = \omega_{[t]}(c,e)\omega^{-1}_{[t]}$.
	\end{enumerate}
\end{Lem}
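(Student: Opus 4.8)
The plan is to treat the two parts separately, using throughout that $\omega_{[\;]}$ is a homomorphism of inverse semigroups (Lemma~\ref{lem:hom-lambda}) and that $\Pi(\bbS)$ is an ideal of $\Omega(\bbS,\Theta)$, so that an inner bitranslation $\omega_{(a,t)}=\pi_{(a,t)}$ can be slid past an outer one via the identities $\omega\pi_s=\pi_{\omega s}$ and $\pi_s\omega=\pi_{s\omega}$ from the preliminaries. For~(\ref{lem:lambda-t:np-order}) I would first read off the two equalities not involving the inequality sign. Since $\omega_{[\;]}$ maps into the inverse semigroup $\Omega(\bbS,\Theta)$, it preserves inverses, so $\omega^{-1}_{[t]}=\omega_{[t^{-1}]}$ and hence $\omega^{-1}_{[t]}\omega_{[t]}=\omega_{[t^{-1}t]}=\omega_{[\dom(t)]}$; this is the leftmost equality. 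For the two equalities on the right I would identify $\dom(a,t)$ inside $\bbS$: checking with~(\ref{AE7}) and~(\ref{AE8}) that $(t^{-1}\cdot a^{-1},t^{-1})$ lies in $\bbS$ and is the inverse of $(a,t)$, a one-line product gives $\dom(a,t)=(a,t)^{-1}(a,t)=(t^{-1}\cdot\dom(a),\dom(t))$, which is the middle term; and since $\pi$ is a homomorphism, $\omega^{-1}_{(a,t)}\omega_{(a,t)}=\omega_{\dom(a,t)}$, the rightmost equality.

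The inequality $\omega_{[\dom(t)]}\ge\omega_{\dom(a,t)}$ is between two idempotents of $\Omega(\bbS,\Theta)$, so it suffices to show $\omega_{[\dom(t)]}\omega_{\dom(a,t)}=\omega_{\dom(a,t)}$. Using that $\Pi(\bbS)$ is an ideal, the left-hand side equals $\pi_{\omega_{[\dom(t)]}\dom(a,t)}$, so the claim reduces to the operator identity $\omega_{[\dom(t)]}\dom(a,t)=\dom(a,t)$. Applying the left-operator formula for $\omega_{[\dom(t)]}$ to the pair $(t^{-1}\cdot\dom(a),\dom(t))$ found above, and using that $T\to\End K$ is a homomorphism together with $\dom(t)t^{-1}=t^{-1}$, one gets $\dom(t)\cdot(t^{-1}\cdot\dom(a))=t^{-1}\cdot\dom(a)$ and $\dom(t)\dom(t)=\dom(t)$, which is exactly the required identity.

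For~(\ref{lem:lambda-t:act-vs-trans}), since $\bbS$ is inverse and $\pi$ injective, the triple product $\omega_{[t]}\pi_{(c,e)}\omega^{-1}_{[t]}$ in $\Omega(\bbS)$ collapses to a single inner bitranslation, so the expression $\omega_{[t]}(c,e)\omega^{-1}_{[t]}$ is an unambiguous element of $\bbS$, namely $\big(\omega_{[t]}(c,e)\big)\omega_{[t^{-1}]}$. I would compute it in two strokes: the left-operator formula gives $\omega_{[t]}(c,e)=(t\cdot c,te)$, and the right-operator formula for $\omega_{[t^{-1}]}$ then gives $\big(\ran(tet^{-1})\cdot(t\cdot c),\,tet^{-1}\big)$. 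Since $e$ is idempotent, $tet^{-1}=\ran(te)$, so the second component is $\ran(te)$; and by~(\ref{AE8}) we have $\epsilon(t\cdot c)=\ran(te)$, whence~(\ref{AE7}) yields $\ran(te)\cdot(t\cdot c)=t\cdot c$. Thus the product equals $(t\cdot c,\ran(te))$, which is precisely $t\cdot(c,e)$ for the action of $T$ on $\bbK$ recalled in Remark~\ref{rem:act-on-bbK} and~(\ref{equ:act-on-bbK}).

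I expect the only genuinely non-mechanical point to be the inequality in~(\ref{lem:lambda-t:np-order}): recognising it as an order relation between idempotents and reducing it, through the ideal $\Pi(\bbS)$, to a plain operator identity. Everything else is domain/range bookkeeping, the main care being to confirm that the intermediate pairs really lie in $\bbS$ (equivalently in the correct $K_e$) and to compute the inverse $(a,t)^{-1}$ correctly in the full restricted semidirect product.
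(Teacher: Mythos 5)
Your proof is correct, and it diverges from the paper's in how the inequality in part~(\ref{lem:lambda-t:np-order}) is verified. Both arguments first reduce to comparing $\omega_{[e]}$ with $\omega_{(i,e)}$ for an idempotent $(i,e)$ of $\bbS$ (you compute $\dom(a,t)=(t^{-1}\cdot\dom(a),\dom(t))$ explicitly, as does the paper implicitly), but the paper then checks the \emph{pointwise} inequalities $\omega_{(i,e)}(x,u)\le\omega_{[e]}(x,u)$ and $(x,u)\omega_{(i,e)}\le(x,u)\omega_{[e]}$ for every $(x,u)\in\bbS$, which costs a nontrivial identity $\ran(ue)\cdot\bigl(x(ue\cdot i)\bigr)=x(ue\cdot i)$ resting on the semilattice decomposition of $K$, and tacitly relies on pointwise order of translations implying the natural partial order in $\Omega(\bbS)$. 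You instead observe that both bitranslations are idempotents of the inverse monoid $\Omega(\bbS,\Theta)$, so the order relation is equivalent to the single product identity $\omega_{[\dom(t)]}\omega_{\dom(a,t)}=\omega_{\dom(a,t)}$, and the ideal property $\omega\pi_s=\pi_{\omega s}$ collapses this to one evaluation $\omega_{[\dom(t)]}\dom(a,t)=\dom(a,t)$, which is immediate from $\dom(t)t^{-1}=t^{-1}$. Your route is more economical and entirely intrinsic to the inverse-semigroup structure of the translational hull; the paper's is more concrete and exhibits the order inside $\bbS$ itself. In part~(\ref{lem:lambda-t:act-vs-trans}) you apply the left translation first and the right one second, the reverse of the paper's order; the two agree because the translations are linked, and both computations arrive at $(t\cdot c,\ran(te))=t\cdot(c,e)$.
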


\begin{proof}
	(\ref{lem:lambda-t:np-order})\quad
	The equality relations clearly follow by Lemma \ref{lem:hom-lambda} and by definitions.
	Thus in order to prove the inequality, it suffices to verify that if 
	$(i,e)\in E(\bbS)$, that is, if $e\in E(T)$ and $i\in E(K_e)$ then 
	$\omega_{[e]} \ge \omega_{(i,e)}$.
	Indeed, definitions easily imply for any $(x,u)\in \bbS$ that 
	$$\omega_{(i,e)}(x,u) = (i(e\cdot x),eu) \le (e\cdot x,eu) = \omega_{[e]}(x,u)$$
	and
	\begin{eqnarray*}
	(x,u)\omega_{(i,e)} 
	   & = & (x(u\cdot i),ue) 
	   = \big(\!\ran(ue)\cdot(x(ue\cdot i)),ue\big) \cr
	   & = & \big((\ran(ue)\cdot x)((ue\cdot i)),ue\big)
	   \le (\ran(ue)\cdot x,ue) = (x,u)\omega_{[e]}.
	\end{eqnarray*}

	(\ref{lem:lambda-t:act-vs-trans})\quad
	The equality is routine to check by definitions:
	\begin{eqnarray*}
	\omega_{[t]}(c,e)\omega^{-1}_{[t]} 
	 & = & \omega_{[t]}(c,e)\omega_{[t^{-1}]}
	 = \omega_{[t]}(\ran(et^{-1})\cdot c,et^{-1}) 
	 = (t\ran(et^{-1})\cdot c,tet^{-1})\cr
	 & = & (\ran(te)t\cdot c,\ran(te)) = (t\cdot c,\ran(te))
	 = t\cdot (c,e).
	\end{eqnarray*}
\end{proof}

Consider the subset 
$\olbbS=\Pi(\bbS)\cup \omega_{[\;]}(T)$ of
$\Omega(\bbS,\Theta)$.
Since $\Pi(\bbS)$ is an ideal in $\Omega(\bbS,\Theta)$ it follows 
by Lemma \ref{lem:hom-lambda} that $\olbbS$ is an inverse subsemigroup of $\Omega(\bbS,\Theta)$ in which $\Pi(\bbS)$ is an ideal isomorphic to $\bbS$
and $\omega_{[\;]}(T)$ is a subsemigroup isomorphic to $T$.
Moreover, Lemmas \ref{lem:hom-lambda} and \ref{lem:lambda-t}(\ref{lem:lambda-t:np-order}) imply that the restriction $\olTheta$ of $\Omega(\Theta)$ to $\olbbS$ is a split Billhardt congruence on $\olbbS$.
This motivates the following definitions.

Let $(S,\theta)$ be a normal extension and let 
$\xi\colon S/\theta \to \Omega(S,\theta)$ be a function.
Consider the inverse subsemigroup $\olS$ of $\Omega(S,\theta)$ generated by 
$\Pi(S) \cup \xi(S/\theta)$.
For simplicity, we write $\xi^{-1}(t)$ for the inverse of an element 
$\xi(t)\in \Omega(S,\theta)\ (t\in S/\theta)$.
We say that the function $\xi$ is an \emph{almost Billhardt transversal to} $\theta$ if the following conditions are satisfied:

\medskip\noindent
	(B1)\quad 
	$(\xi(t))^\downharpoonright = \omega^{S/\theta}_t$
	for every $t\in S/\theta$,

\smallskip\noindent
	(B2)\quad 
	$\xi^{-1}(t) \xi(t) \ge \omega^{-1} \omega$
	for every element $\omega\in \olS\setminus \xi(S/\theta)$ such that 
	$\omega^\downharpoonright = \omega^{S/\theta}_t$. 

\medskip\noindent
We call $\theta$ an \emph{almost Billhardt congruence} on $S$ if
there exists an almost Billhardt transversal to $\theta$.
For our later convenience, notice that (B1) implies by 
Proposition \ref{prop:trhull-next}(\ref{prop:trhull-next1}) that
\begin{equation*}
(\xi^{-1}(t))^\downharpoonright = \omega^{S/\theta}_{t^{-1}}
\quad (t\in S/\theta)
\end{equation*}
for every almost Billhardt transversal $\xi$ to $\theta$. 

If $\xi$ is an almost Billhardt transversal to $\theta$ which is also a homomorphism then $\theta$ is called a \emph{split almost Billhardt transversal to} $\theta$.
We say that $\theta$ is a \emph{split almost Billhardt congruence} on $S$ if there exists a split almost Billhardt transversal to $\theta$.
In this case, the inverse subsemigroup $\olS$ of $\Omega(S,\theta)$ generated by $\Pi(S) \cup \xi(S/\theta)$ coincides with $\Pi(S) \cup \xi(S/\theta)$ since $\xi(S/\theta)$ is an inverse subsemigroup and $\Pi(S)$ is an ideal in $\Omega(S,\theta)$.
Consequently, property (B2) is equivalent in this special case to 

\medskip\noindent
	(sB2)\quad 
	$\xi(\dom(t)) \ge \omega_{\dom(s)}$
	for every $s\in S$ such that $\theta(s) = t$. 

\medskip
It is straightforward by definition and by Proposition \ref{prop:trhull-next} that if $\theta$ is an almost Billhardt (resp.\ split almost Billhardt) congruence on $S$ then the restriction $\oltheta$ of $\Omega(\theta)$ to $\olS$ is a Billhardt (resp.\ split Billhardt) congruence on $\olS$, and
$\olS/\oltheta$ is isomorphic to $S/\theta$. 
Moreover, the types of congruences just introduced generalize Billhardt and split Billhardt congruences, respectively, since a congruence $\theta$ is a Billhardt (resp.\ split Billhardt) congruence if and only if there exists an almost Billhardt (resp.\ split almost Billhardt) transversal to $\theta$ such that $\xi(S/\theta)\subseteq \Pi(S)$.
This justifies the `only if' parts of the following alternative characterizations of such congruences.

\bigskip

\begin{Prop} 
	Suppose that $S$ is an inverse semigroup and $\theta$ is a congruence on $S$.
	The congruence $\theta$ is an almost Billhardt (resp.\ split almost Billhardt) congruence on $S$ if and only if there exists an inverse subsemigroup $\widetilde{S}$ of $\Omega(S,\theta)$ containing $\Pi(S)$ such that the restriction of $\Omega(\theta)$ to $\widetilde{S}$ is a Billhardt (resp.\ split Billhardt) congruence on $\widetilde{S}$.
\end{Prop}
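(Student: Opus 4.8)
The plan is to treat the two directions separately, noting that the forward implication is essentially recorded in the paragraph preceding the statement. For the `only if' part I would simply take $\widetilde S=\olS$, the inverse subsemigroup of $\Omega(S,\theta)$ generated by $\Pi(S)\cup\xi(S/\theta)$ for an almost Billhardt (resp.\ split almost Billhardt) transversal $\xi$ to $\theta$. By construction $\Pi(S)\subseteq\olS$, and it was already observed that the restriction $\oltheta$ of $\Omega(\theta)$ to $\olS$ is then a Billhardt (resp.\ split Billhardt) congruence; this is exactly what the statement demands.

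For the `if' part, suppose $\widetilde S$ is an inverse subsemigroup of $\Omega(S,\theta)$ with $\Pi(S)\subseteq\widetilde S$ such that $\widetilde\theta:=\Omega(\theta)|_{\widetilde S}$ is Billhardt on $\widetilde S$. The first step is to identify $\widetilde S/\widetilde\theta$ with $S/\theta$: the restriction of $()^\downharpoonright$ to $\widetilde S$ has kernel $\widetilde\theta$ by Proposition~\ref{prop:trhull-next}(\ref{prop:trhull-next1}), and since $\Pi(S)\subseteq\widetilde S$ and $(\pi_s)^\downharpoonright=\omega^{S/\theta}_{\theta(s)}$, its image is all of $\Pi(S/\theta)\cong S/\theta$; hence $()^\downharpoonright$ induces an isomorphism $\Phi\colon\widetilde S/\widetilde\theta\to S/\theta$. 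By the characterization of Billhardt congruences recalled just before the statement, $\widetilde\theta$ admits an almost Billhardt transversal $\widetilde\xi$ with $\widetilde\xi(\widetilde S/\widetilde\theta)\subseteq\Pi(\widetilde S)$; in particular the inverse subsemigroup attached to $\widetilde\xi$ is $\Pi(\widetilde S)$ itself. Writing $\widetilde\xi(\Phi^{-1}(t))=\pi^{\widetilde S}_{w_t}$ with $w_t\in\widetilde S\subseteq\Omega(S,\theta)$, I would then define $\xi\colon S/\theta\to\Omega(S,\theta)$ by $\xi(t)=w_t$, and show that $\xi$ is an almost Billhardt transversal to $\theta$ which is a homomorphism whenever $\widetilde\xi$ is.

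Conditions (B1) and the split-case homomorphism property are routine. For (B1), the identity $(\pi^{\widetilde S}_{w_t})^\downharpoonright=\omega^{\widetilde S/\widetilde\theta}_{\widetilde\theta(w_t)}$ together with (B1) for $\widetilde\xi$ forces $\widetilde\theta(w_t)=\Phi^{-1}(t)$, which under $\Phi$ reads $(\xi(t))^\downharpoonright=\omega^{S/\theta}_t$; the homomorphism property transfers at once because the canonical map $\pi^{\widetilde S}$ is injective. The main obstacle is (B2). Here I would use that the inverse subsemigroup $\olS$ attached to $\xi$ satisfies $\olS\subseteq\widetilde S$, since $\Pi(S)$ and all the $w_t$ lie in $\widetilde S$; consequently each $\omega\in\olS\setminus\xi(S/\theta)$ is an element $w\in\widetilde S$ with $w\neq w_s$ for all $s$, so $\pi^{\widetilde S}_w$ ranges inside $\Pi(\widetilde S)\setminus\widetilde\xi(\widetilde S/\widetilde\theta)$, while the requirement $w^\downharpoonright=\omega^{S/\theta}_t$ translates via $\Phi$ into $(\pi^{\widetilde S}_w)^\downharpoonright=\omega^{\widetilde S/\widetilde\theta}_{\Phi^{-1}(t)}$; thus the index sets occurring in the two versions of (B2) correspond.

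It then remains to transport the inequality. Condition (B2) for $\widetilde\xi$ yields $(\pi^{\widetilde S}_{w_t})^{-1}\pi^{\widetilde S}_{w_t}\ge(\pi^{\widetilde S}_w)^{-1}\pi^{\widetilde S}_w$ in $\Pi(\widetilde S)$, and since $\pi^{\widetilde S}$ is an isomorphism onto $\Pi(\widetilde S)$ it both preserves and reflects the natural partial order, so this is equivalent to $w_t^{-1}w_t\ge w^{-1}w$ in $\widetilde S$, that is, to $\xi^{-1}(t)\xi(t)\ge\omega^{-1}\omega$ in $\Omega(S,\theta)$, which is exactly (B2) for $\xi$. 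The delicate part of the whole argument is keeping the two distinct $()^\downharpoonright$-operations, the two canonical homomorphisms and the identification $\Phi$ apart, and verifying that the index sets appearing in the two forms of (B2) genuinely match before invoking the order-reflecting property of $\pi^{\widetilde S}$.
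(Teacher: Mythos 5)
Your proposal is correct and takes essentially the same route as the paper: the `only if' part is the remark preceding the statement with $\widetilde S=\olS$, and the `if' part transports a Billhardt transversal of $\widetilde\theta$ back along the canonical isomorphism $S/\theta\cong\widetilde S/\widetilde\theta$, which is exactly the paper's $\xi(\theta(s))=\widetilde\xi(\widetilde\theta(\pi_s))$. The only differences are cosmetic: you encode the transversal of $\widetilde\theta$ through $\Pi(\widetilde S)$ and write out the verification of (B1) and (B2) (including the order-reflecting property of $\pi^{\widetilde S}$ and the inclusion $\olS\subseteq\widetilde S$), steps the paper leaves implicit.
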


\begin{proof}
	To show the `if' parts, let $\widetilde{S}$ be a subsemigroup of $\Omega(S,\theta)$ containing $\Pi(S)$, and denote  the restriction of $\Omega(\theta)$ to $\widetilde{S}$ by $\widetilde{\theta}$.
	Suppose that $\widetilde{\xi}\colon \widetilde{S}/\widetilde{\theta}\to \widetilde{S}$ is a Billhardt transversal to $\widetilde{\theta}$.
	Since $\iota$ in Proposition \ref{prop:trhull-next}(\ref{prop:trhull-next2}) 
	is an isomorphism, the function  
	$\xi\colon S/\theta\to \widetilde{S},\ 
	\xi(\theta(s)) = \widetilde{\xi}(\widetilde{\theta}(\pi_s))$
	is an almost Billhardt transversal to $\theta$. 
	In particular, if $\widetilde{\xi}$ is split then $\xi$ is also split.
\end{proof}   

Now we are ready to prove the main result of this section.

\bigskip

\begin{Thm}
	A normal extension $(S,\theta)$ is isomorphic to a full restricted semidirect product if and only if $\theta$ is a split almost Billhardt congruence on $S$. 
\end{Thm}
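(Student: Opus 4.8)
The plan is to prove the two implications separately, exploiting the translational‑hull machinery developed above.

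For the easier direction, suppose $(S,\theta)$ is isomorphic, as a normal extension, to a full restricted semidirect product $\bbS=\rsd KT$. First I would observe that it suffices to treat $(\bbS,\Theta)$ itself, since $\Omega(S,\theta)$, the homomorphism $()^\downharpoonright$ and the relation $\Omega(\theta)$ are all invariants of the isomorphism type of a normal extension, so a (split) almost Billhardt transversal transports along any isomorphism of normal extensions. For $(\bbS,\Theta)$ the required transversal is essentially already built: identifying $\bbS/\Theta$ with $T$ via the second projection, set $\xi(t)=\omega_{[t]}$. Then $\xi$ is a homomorphism by Lemma~\ref{lem:hom-lambda}, it satisfies (B1) because $\omega^\downharpoonright_{[t]}=\omega^T_t$ by Lemma~\ref{lem:lambda-t-trans}, and it satisfies (sB2) because Lemma~\ref{lem:lambda-t}(\ref{lem:lambda-t:np-order}) gives $\xi(\dom t)=\omega_{[\dom t]}=\omega^{-1}_{[t]}\omega_{[t]}\ge \omega_{\dom(a,t)}$ for every $(a,t)\in\bbS$. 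Hence $\Theta$, and therefore $\theta$, is a split almost Billhardt congruence.

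For the converse, assume $\xi\colon S/\theta\to\Omega(S,\theta)$ is a split almost Billhardt transversal, that is, a homomorphism satisfying (B1) and (sB2). Put $T=S/\theta$, $K=\Ker\theta$, and let $\eta\colon K\to E(T)$ be the surjective homomorphism $a\mapsto\theta(a)$. I would define an action of $T$ on $K$ by conjugating inner bitranslations: for $a\in K$ let $t\cdot a$ be the unique element of $S$ with $\pi_{t\cdot a}=\xi(t)\,\pi_a\,\xi(t)^{-1}$, which exists and is unique because $\Pi(S)$ is an ideal of $\Omega(S,\theta)$ and $\pi$ is injective (mimicking Lemma~\ref{lem:lambda-t}(\ref{lem:lambda-t:act-vs-trans})). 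Applying $()^\downharpoonright$ and (B1) gives $\theta(t\cdot a)=t\,\theta(a)\,t^{-1}=\ran(t\,\eta(a))$, so $t\cdot a\in K$ and (\ref{AE8}) holds with $\epsilon=\eta$. Property (\ref{AE7}) follows from (sB2): applying it to $a$ and to $\ran a$ yields $\xi(\eta(a))\ge\omega_{\dom a}$ and $\xi(\eta(a))\ge\omega_{\ran a}$, whence $\xi(\eta(a))\,\pi_a=\pi_a=\pi_a\,\xi(\eta(a))$ and therefore $\eta(a)\cdot a=a$.

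The one genuinely technical point, and the step I expect to be the main obstacle, is showing that each $\alpha_t\colon a\mapsto t\cdot a$ is an \emph{endomorphism} of $K$; once this is known, $t\mapsto\alpha_t$ is automatically a homomorphism because $\xi$ is, and then Proposition~\ref{prop:rsd-act} yields that the action satisfies (AFR). Here I would first reduce the general case to idempotents: writing $g=\xi(\dom t)=\xi(t)^{-1}\xi(t)$, one has $\xi(t)X\xi(t)^{-1}=\xi(t)Y\xi(t)^{-1}$ exactly when $gXg=gYg$, so $\alpha_t$ is an endomorphism if and only if $g\,\pi_a\,g\,\pi_b\,g=g\,\pi_{ab}\,g$ for all $a,b\in K$. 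Since $g$ is idempotent, $g\,\pi_a\,g=\pi_{\dom(t)\cdot a}$ and the grouping $g\,\pi_a\,g\,\pi_b\,g=(g\,\pi_a\,g)(g\,\pi_b\,g)$ reduces the claim to $(\dom(t)\cdot a)(\dom(t)\cdot b)=\dom(t)\cdot(ab)$; as $\dom t$ ranges over $E(T)$, this is precisely the assertion that every idempotent $e\in E(T)$ acts as an endomorphism. That last fact I would establish from the properties of idempotent bitranslations recorded in the Preliminaries (so that the left action of $\xi(e)$ is a semilattice endomorphism of $E(S)$) together with (sB2), which pins down $\xi(e)\,\pi_i$ for the relevant inner idempotents $\pi_i$.

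Finally, with $\rsd KT$ in hand, I would exhibit the isomorphism of normal extensions explicitly. Define $\phi\colon S\to\rsd KT$ by $\phi(s)=(a,\theta(s))$, where $a$ is determined by $\pi_a=\pi_s\,\xi(\theta(s))^{-1}$; the computation above shows $\theta(a)=\ran(\theta(s))$, so $a\in K_{\ran(\theta(s))}$ and $\phi$ indeed lands in $\rsd KT$. That $\phi$ is a homomorphism follows from $\xi(\theta(s))^{-1}\xi(\theta(s))=\xi(\dom\theta(s))\ge\omega_{\dom s}$, which forces $\pi_s\,\xi(\dom\theta(s))=\pi_s$ and collapses the product rule $(a,t)(a',t')=(a(t\cdot a'),tt')$ to $\phi(ss')$; injectivity uses the same identity, and surjectivity is the reverse construction of an $s$ with $\pi_s=\pi_a\,\xi(t)$ from a given $(a,t)\in\rsd KT$, again invoking (sB2) to verify $\phi(s)=(a,t)$. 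Since $\phi$ carries $K=\Ker\theta$ onto the Kernel $\bbK$ of the second projection and satisfies $s\rtheta s'$ if and only if $\theta(s)=\theta(s')$ if and only if $\phi(s)$ and $\phi(s')$ have equal second components, $\phi$ is an isomorphism of normal extensions, which completes the proof.
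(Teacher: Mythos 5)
Your argument for the ``only if'' direction is exactly the paper's: the transversal $\xi(t)=\omega_{[t]}$, with (B1) from Lemma \ref{lem:lambda-t-trans}, the homomorphism property from Lemma \ref{lem:hom-lambda}, and (sB2) from Lemma \ref{lem:lambda-t}(\ref{lem:lambda-t:np-order}); the transport along an isomorphism of normal extensions is legitimate and is implicitly used in the paper as well. For the converse your formulas also coincide with the paper's (the conjugation action $t\cdot a=\xi(t)\pi_a\xi^{-1}(t)$ and the map $\pi_s\mapsto(\pi_s\xi^{-1}(\theta(s)),\theta(s))$), but the routes differ in how the technical verifications are handled: the paper first forms $\olS=\Pi(S)\cup\xi(S/\theta)$, observes that the restriction $\oltheta$ of $\Omega(\theta)$ is a \emph{split Billhardt} congruence on $\olS$, invokes the proof of Lawson's Theorem 5.3.12 to get $\olS\cong\rsd{\olK}{T}$, and only then restricts to $\Pi(S)$; you work directly inside $\Omega(S,\theta)$ and must therefore supply from scratch the facts that Lawson's proof provides. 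What your route buys is a self-contained argument that never leaves $\Pi(K)$; what it costs is that the endomorphism verification, which you correctly identify as the crux, has to be done by hand.

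On that crux: your reduction of ``each $\alpha_t$ is an endomorphism'' to the idempotent case (via $g=\xi(\dom t)$ and $g\pi_ag\pi_bg=(g\pi_ag)(g\pi_bg)$) is correct, but the idempotent case itself is only asserted, and it is not a formal consequence of ``$\omega e=e\omega\in E(S)$'' alone --- it genuinely needs (sB2). The missing computation runs as follows. With $g=\xi(e)$ idempotent and $a,b\in K$, one has $g\pi_ag\pi_bg=\pi_{(gag)(bg)}$ and $g\pi_{ab}g=\pi_{(ga)(bg)}$, and writing $c=ga$, $d=bg$, the two differ by replacing $\dom(c)\ran(d)$ with $g\dom(c)\ran(d)$; so it suffices to show $g\ge\dom(c)\ran(d)=(a^{-1}ga)(bgb^{-1})$. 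Here $a^{-1}ga$ and $bgb^{-1}$ are idempotents of $S$ lying over $\theta(a)e$ and $\theta(b)e$ respectively, so (sB2) applied to these elements gives $\xi(\theta(a)e)\ge\pi_{a^{-1}ga}$ and $\xi(\theta(b)e)\ge\pi_{bgb^{-1}}$, and since $\xi(\theta(a)e),\xi(\theta(b)e)\le g$ this yields the required inequality. With this inserted your proof is complete and correct; without it, the key step is a claim rather than a proof.
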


\begin{proof}
	Since the argument in the paragraph after the proof of Lemma \ref{lem:lambda-t} shows the `only if' part, it suffices to prove the `if' part.
	Suppose that $\theta$ is a split almost Billhardt congruence on $S$.
	For brevity, denote $S/\theta$ by $T$, and let
	$\xi\colon T \to \Omega(S,\theta)$ be a homomorphism such that
	(B1) and (sB2) hold.
	Clearly, the restriction $\xi_0\colon T\to \xi(T),\ t\mapsto \xi(t)$ of $\xi$ is an isomorphism. 
	By the former observations on split almost Billhardt congruences, the subset 
	$\olS = \Pi(S)\cup \xi(T)$ forms an inverse subsemigroup of $\Omega(S,\theta)$ such that the restriction $\oltheta$ of $\Omega(\theta)$ to $\olS$ is a split Billhardt congruence.
	It is easy to see that $\Ker \oltheta =  \Pi(K)\cup \{\xi(\dom(t)): t\in T\} =
	\Pi(K)\cup \xi(E(T))$ which we denote by $\olK$.
	Furthermore, the function $\olxi\colon \olS/\oltheta\to \olS$ defined by 
	$\olxi(\oltheta(\pi_s)) = \xi(\theta(s))$ and 
	$\olxi(\oltheta(\xi(t))) = \xi(t)$ is a split Billhardt transversal to $\oltheta$.
	Hence we obtain by Proposition \ref{prop:trhull-next}(\ref{prop:trhull-next2}) that the function 
	$\iota_0\colon T\to \olS/\oltheta,\ \theta(s)\mapsto \oltheta(\pi_s)$
	is an isomorphism. 
	In order to make the rest of the argument easier to follow we identify $\xi(T)$ and $\olS/\oltheta$ with $T$ via $\xi_0$ and $\iota_0$, respectively.
	
	By applying the proof of \cite[Theorem 5.3.12]{LawInvSg} for $\olS$ and $\oltheta$, we obtain a full restricted semidirect product $\rsd{\olK}{T}$ of $\olK$ by $T$  with respect to the 
	action of $T$ on $\olK$ defined by the rule
	\begin{equation}
	\label{restr:act}
	t\cdot \omega = \xi(t) \omega \xi^{-1}(t)\quad (t\in T,\ \omega\in \olK)
	\end{equation}
	which satisfies condition (\ref{conAFR}) for the surjective homomorphism  
	$\olepsilon\colon \olK \to E(T)$ defined as follows:
	for any $e\in E(T)$, we have 
	$\olepsilon(\omega) = e$ if and only if $\omega = \xi(e)$ or
	$\omega = \omega_a$ for some $a\in K_e$.
	Moreover, we obtain an isomorphism 
	\begin{equation}
	\label{restr:isom}
	\olphi\colon \olS \to \rsd{\olK}{T},\ \ \omega \mapsto (\omega\xi^{-1}(t),t)\quad
	\hbox{if}\ \ \omega\in\Pi(S)\ \hbox{and}\ \omega^\downharpoonright = \omega^T_t,\    
	\hbox{or}\ \ \omega = \xi(t).
	\end{equation}

	Notice that if $\omega\in\Pi(K)$ then we have $t\cdot \omega \in \Pi(K)$ in (\ref{restr:act}), and similarly, if $\omega\in\Pi(S)$ then we have
	$\omega\xi^{-1}(t)\in \Pi(K)$ in (\ref{restr:isom}).
	Thus the action of $T$ on $\olK$ restricts to $\Pi(K)$, and
	the restriction $\epsilon$ of $\olepsilon$ to $\Pi(K)$ has property (\ref{conAFR}).
	Hence the latter action defines a full restricted semidirect product $\rsd{\Pi(K)}{T}$, and the restriction $\phi$ of $\olphi$ to
	$\rsd{\Pi(K)}{T}$ is an isomorphism from $\Pi(S)$ onto
	$\rsd{\Pi(K)}{T}$.
	Consequently $S$ is isomorphic to a full restricted semidirect product of $K$ by $T$, and the theorem is proved.
\end{proof}

\section
{Normal extensions embeddable in full restricted semidirect products}
\label{sect:emb-rsd}

In this section we prove that each normal extension defined by an almost Billhardt congruence is embeddable in a full restricted semidirect product whose Kernel classes are direct products of Kernel classes of the normal extension.
The full restricted semidirect product appearing in the proof is an inverse subsemigroup of Houghton's wreath product of the Kernel of the normal extension by its factor which corresponds to the respective normal extension triple rather than merely to the Kernel and the factor.

Consider a normal extension triple $(K,\eta,T)$. 
For brevity, put $E=E(T)$, and let $K = \bigcup_{e\in E} K_e$ be the semilattice decomposition of $K$ corresponding to $\eta$.
Consider Houghton's wreath product $\Hwr{K}{T} = \rsd {P_{K,T}}T$ of $K$ by $T$, and recall from Section \ref{sect:prelim} that 
$P_{K,T}$ is a semilattice $E$ of the direct powers $K^{Te}\ (e\in E)$.
Define the following subset of the inverse semigroup $P_{K,T}$:
\begin{equation}
	\label{equ:pkteta}
	P^\eta_{K,T} = \{\alpha\in P_{K,T}: \alpha(x)\in K_{\ran(x)}\ \mbox{for every}\ 
	x\in \Hdom \alpha\}.
\end{equation}
Since $K^{Te} = \{\alpha\in P_{K,T}: \Hdom \alpha = Te\}$ the intersection $P^\eta_{K,T}\cap K^{Te}$ is the direct product $P_e = \prod_{x\in Te} K_{\ran(x)}$ for any $e\in E$.
Thus if $e,f\in E$ and $\alpha\in P_e$, $\beta\in P_f$ then
$\alpha^{-1}(x) = (\alpha(x))^{-1}\in K_{\ran(x)}$ for every 
$x\in Te = \Hdom \alpha^{-1}$, and 
$(\alpha\oplus\beta)(x) = \alpha(x)\beta(x)\in K_{\ran(x)}$ for every 
$x\in Tef = \Hdom(\alpha\oplus\beta)$.
Hence $P^\eta_{K,T}$ is an inverse subsemigroup in $P_{K,T}$ which is a semilattice $E$ of the inverse subsemigroups $P_e\ (e\in E)$.
Moreover, $P^\eta_{K,T}$ is closed under the action of $T$ on $P_{K,T}$.
Indeed, if $t\in T$ and $\alpha\in P^\eta_{K,T}$ with $\Hdom \alpha = Te\ (e\in E)$
then $\Hdom(t\cdot \alpha) = Tet^{-1}$, and this implies for every
$x\in \Hdom(t\cdot \alpha)$ that $xtt^{-1} = x$ and
$(t\cdot \alpha)(x) = \alpha(xt) \in K_{\ran(xt)} = K_{\ran(x)}$ 
whence $t\cdot \alpha\in P^\eta_{K,T}$ follows.
In particular, if $t=e\in E$ then we have
$(e\cdot \alpha)(x) = \alpha(xe) = \alpha(x)$ for any 
$x\in \Hdom(e\cdot \alpha)$, and this implies that the action of $T$ on $P^\eta_{K,T}$ induced by the action in the definition of $\Hwr{K}{T}$ has properties (\ref{conAE7mod}) and (\ref{conAE8mod}). 
By Propositon \ref{prop:rsd-act} and Remark \ref{rem:conAE78} this argument verifies the following statement.

\bigskip

\begin{Prop}
	Let $(K,\eta,T)$ be a normal extension triple where the semilattice decomposition of $K$ induced by $\eta$ is $K = \bigcup_{e\in E(T)} K_e$, and consider Houghton's wreath product $\Hwr{K}{T} = \rsd {P_{K,T}}T$ of $K$ by $T$.
	The set $P^\eta_{K,T}$ defined in (\ref{equ:pkteta}) forms an inverse subsemigroup in $P_{K,T}$, and
	the restriction of the action of $T$ on $P_{K,T}$ to 
	$P^\eta_{K,T}$ is an action of $T$ on $P^\eta_{K,T}$ satisfying axiom 
	{\rm (AFR)}.
	Consequently, the full restricted semidirect product 
	$\rsd{P^\eta_{K,T}}{T}$
	of $P^\eta_{K,T}$ by $T$ with respect to this action is an inverse subsemigroup in $\Hwr{K}{T}$.  
\end{Prop}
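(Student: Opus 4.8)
The statement splits into three claims: (i) that $P^\eta_{K,T}$ is an inverse subsemigroup of $P_{K,T}$; (ii) that the restriction of the action of $T$ to $P^\eta_{K,T}$ is well defined and satisfies axiom (AFR); and (iii) that the resulting full restricted semidirect product $\rsd{P^\eta_{K,T}}{T}$ is an inverse subsemigroup of $\Hwr{K}{T} = \rsd{P_{K,T}}{T}$. My plan is to dispatch (i) and (ii) directly and then obtain (iii) almost for free. The one genuine design choice is in (ii): rather than checking condition (\ref{conAFR}) head-on, I would verify the two modular conditions (\ref{conAE7mod}) and (\ref{conAE8mod}) relative to the semilattice decomposition $P^\eta_{K,T} = \bigcup_{e\in E} P_e$ with $P_e = P^\eta_{K,T}\cap K^{Te}$, and only then invoke Proposition \ref{prop:rsd-act} together with Remark \ref{rem:conAE78} to convert these into (AFR). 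This is exactly the simplification those two results were set up to supply.

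For claim (i) the cleanest route is to observe that $P^\eta_{K,T}\cap K^{Te}$ is precisely the direct product $P_e = \prod_{x\in Te} K_{\ran(x)}$. Each factor $K_{\ran(x)}$ is an inverse subsemigroup of $K$, so $P_e$ is an inverse subsemigroup of the direct power $K^{Te}$. Closure of $P^\eta_{K,T}$ under $\oplus$ and under inversion then reduces to the pointwise facts that $\alpha(x)\beta(x)$ and $(\alpha(x))^{-1}$ again lie in $K_{\ran(x)}$ whenever $\alpha(x),\beta(x)$ do, which hold because $K_{\ran(x)}$ is a subsemigroup closed under inverses. Using $Te\cap Tf = Tef$ and the corresponding $P_eP_f\subseteq P_{ef}$, this exhibits $P^\eta_{K,T}$ as a semilattice $E$ of the subsemigroups $P_e$.

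For claim (ii) the first step is closure under the action. For $\alpha\in P^\eta_{K,T}$ with $\Hdom \alpha = Te$ and any $t\in T$, the domain of $t\cdot\alpha$ is $(Te)t^{-1}$, and the load-bearing computation is that every $x$ in this domain satisfies $xtt^{-1}=x$, whence $\ran(xt)=\ran(x)$; consequently $(t\cdot\alpha)(x)=\alpha(xt)\in K_{\ran(xt)}=K_{\ran(x)}$, so $t\cdot\alpha\in P^\eta_{K,T}$. Condition (\ref{conAE7mod}) is then immediate, since for $\alpha\in P_e$ one has $xe=x$ on $Te$ and hence $e\cdot\alpha=\alpha$; and (\ref{conAE8mod}) comes down to identifying the idempotent generator of $\Hdom(t\cdot\alpha)=(Te)t^{-1}$ as $\ran(te)$, so that $t\cdot\alpha\in P_{\ran(te)}$. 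With both modular conditions established, Proposition \ref{prop:rsd-act} and Remark \ref{rem:conAE78} yield (AFR).

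Claim (iii) is then essentially formal: the homomorphism $\epsilon$ attached to $P^\eta_{K,T}$ is the restriction of the one attached to $P_{K,T}$, so the underlying set $\{(\alpha,t): \alpha\in P^\eta_{K,T},\ \epsilon(\alpha)=\ran(t)\}$ of $\rsd{P^\eta_{K,T}}{T}$ sits inside that of $\rsd{P_{K,T}}{T}=\Hwr{K}{T}$, and it is closed under the multiplication $(\alpha,t)(\beta,u)=(\alpha\oplus(t\cdot\beta),tu)$ and under inversion precisely because $P^\eta_{K,T}$ is closed under $\oplus$ and under the action; hence it is an inverse subsemigroup. I expect the only real obstacle to sit in the domain bookkeeping of step (ii): verifying that $(Te)t^{-1}$ is again a principal left ideal and correctly pinning down its idempotent generator as $\ran(te)$, which is what makes (\ref{conAE8mod}) come out with the predicted index, together with the companion identities $xtt^{-1}=x$ and $\ran(xt)=\ran(x)$ on that domain. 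Everything else reduces pointwise to the fact that the $K_e$ form the semilattice decomposition of $K$ associated with $\eta$.
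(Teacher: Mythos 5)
Your proposal is correct and follows essentially the same route as the paper: identify $P^\eta_{K,T}\cap K^{Te}$ with the direct product $P_e=\prod_{x\in Te}K_{\ran(x)}$ to get the inverse subsemigroup and semilattice structure pointwise, check closure under the action via $xtt^{-1}=x$ and $\ran(xt)=\ran(x)$ on $\Hdom(t\cdot\alpha)=(Te)t^{-1}=T\ran(te)$, verify (\ref{conAE7mod}) and (\ref{conAE8mod}), and then invoke Proposition \ref{prop:rsd-act} and Remark \ref{rem:conAE78}. The only cosmetic difference is that you spell out the final containment of $\rsd{P^\eta_{K,T}}{T}$ in $\Hwr{K}{T}$, which the paper leaves as an immediate consequence.
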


\medskip

Let us call the inverse semigroup $\rsd{P^\eta_{K,T}}{T}$
\emph{Houghton's wreath product of $K$ by $T$ along $\eta$}, and denote it by
$\Hwrp{K}{T}$.
Now we can formulate our embedding theorem.

\bigskip

\begin{Thm}
	\label{ThmM}
	Let $\theta$ be an almost Billhardt congruence on an inverse semigroup $S$, and let $\eta\colon \Ker \theta \to E(S/\theta)$ be the restriction of $\theta^\natural$.
	Then the normal extension $(S,\theta)$ is embeddable in
	$\Hwrp{\Ker \theta}{S/\theta}$. 
\end{Thm}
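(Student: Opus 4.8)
The plan is to construct an explicit embedding of the normal extension $(S,\theta)$ into $\Hwrp{\Ker\theta}{S/\theta}$ by exploiting the almost Billhardt transversal $\xi$ and the translational-hull machinery developed earlier. Write $K=\Ker\theta$, $T=S/\theta$, and $E=E(T)$, and fix an almost Billhardt transversal $\xi\colon T\to\Omega(S,\theta)$ satisfying (B1) and (B2). The key idea is that $\xi$ lets us ``coordinatize'' each element $s\in S$ by a function on a principal left ideal of $T$, in the spirit of the classical Kalou\v{z}nin--Krasner embedding: for $s\in S$ with $\theta(s)=t$, assign the pair $(\alpha_s,t)$ where $\alpha_s\colon T\,\dom(t)\to K$ is a function whose value at each $x\in T\,\dom(t)$ records, via the bitranslations, the ``$K$-component'' of $s$ relative to the coset $x$. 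Concretely, the natural candidate is
\begin{equation*}
\alpha_s(x) = \bigl(\xi(x)\,s\,\xi^{-1}(x\,t^{-1})\bigr)\quad\text{suitably read off as an element of }K,
\end{equation*}
so that $\alpha_s(x)$ lands in the Kernel class $K_{\ran(x)}$, matching the defining condition of $P^\eta_{K,T}$.

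\textbf{First} I would verify that $\alpha_s$ is well-defined with domain exactly $T\,\dom(t)=Tt^{-1}t$ and that $\alpha_s(x)\in K_{\ran(x)}$ for all $x$ in its domain; this uses (B1) to control how $\xi$ interacts with $()^\downharpoonright$ and the fact that $\Pi(S)$ is an ideal in $\Omega(S,\theta)$, which forces the relevant products of bitranslations to be inner (hence identifiable with elements of $S$, indeed of $K$ when the $T$-component is idempotent). This places $(\alpha_s,t)$ in $\rsd{P^\eta_{K,T}}{T}=\Hwrp{K}{T}$. \textbf{Next} I would check that the map $\Phi\colon S\to\Hwrp{K}{T}$, $s\mapsto(\alpha_s,t)$, is a homomorphism: the $T$-component is $\theta^\natural$, which is a homomorphism by construction, and the $K$-component must satisfy the $\oplus$-with-twisted-action multiplication rule of the full restricted semidirect product. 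This amounts to the identity $\alpha_{ss'}=\alpha_s\oplus(t\cdot\alpha_{s'})$, which should follow formally from the cocycle-like behavior of $x\mapsto\xi(x)\,(-)\,\xi^{-1}(\cdots)$ together with the action $t\cdot\alpha$ defined by reindexing, $(t\cdot\alpha)(x)=\alpha(xt)$.

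\textbf{Then} I would establish injectivity and, crucially, that $\Phi$ is an embedding \emph{of normal extensions}, i.e. that it respects both the Kernel and the trace in the sense recorded in the preliminaries: namely that $\Phi(K)\subseteq\Ker\pi_2$ and that $s\rtheta s'$ holds if and only if $\Phi(s)\mathrel{\ker\pi_2}\Phi(s')$. The trace condition is immediate since the second component of $\Phi(s)$ is $\theta(s)$, so $\ker\pi_2$ pulls back to $\theta$. For injectivity one evaluates $\alpha_s$ at $x=\xi$-image of $\dom(t)$ (or at a well-chosen generator of the domain) to recover $s$ itself from its coordinate function, using that the canonical homomorphism $\pi$ is injective. \textbf{The main obstacle} I anticipate is the well-definedness and the homomorphism identity for $\alpha_s$: making precise how a product $\xi(x)\,s\,\xi^{-1}(x t^{-1})$ of bitranslations is read as a genuine element of $K_{\ran(x)}$ requires careful bookkeeping with the linkedness of bitranslations, the idempotent identities $\omega e=e\omega\in E(S)$ and $(\omega a)^{-1}=a^{-1}\omega^{-1}$ listed at the end of Section~\ref{sect:prelim}, and property (B2) to guarantee the domains match up so that the $\oplus$ multiplication does not truncate. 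Once these coordinate computations are organized—ideally by first treating idempotent cosets and then transferring via the action—the homomorphism and embedding properties should fall out by the same formal manipulations, and the fact that the Kernel classes of the target are the direct products $P_e=\prod_{x\in Te}K_{\ran(x)}$ of idempotent $\theta$-classes is exactly what the construction of $P^\eta_{K,T}$ was arranged to deliver.
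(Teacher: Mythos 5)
Your overall strategy is the right one and is essentially the paper's: coordinatize $s$ by a Kalou\v{z}nin--Krasner-type function built from the transversal $\xi$, defined on a principal left ideal of $T=S/\theta$, and check that it lands in $P^\eta_{K,T}$. However, the explicit formulas you give are wrong in two places, and with them the construction fails at the very first step. First, for $(\alpha_s,t)$ to lie in Houghton's wreath product you need $\Hdom\alpha_s=Tt^{-1}=T\ran(t)$, not $T\dom(t)=Tt^{-1}t$; these principal left ideals differ in general, so your pair is not even an element of $\Hwr{K}{T}$. Second, the value should be $\xi(x)\,s\,\xi^{-1}(xt)$, not $\xi(x)\,s\,\xi^{-1}(xt^{-1})$: under $()^\downharpoonright$ and (B1), your expression has $\theta$-class $xt(xt^{-1})^{-1}=xttx^{-1}$, which is not an idempotent in general, so $\alpha_s(x)$ does not lie in $K$ at all, let alone in $K_{\ran(x)}$. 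With the correct choices one gets $\theta$-class $\ran(xt)=\ran(x)$ for $x\in T\ran(t)$, which is exactly what membership in $P^\eta_{K,T}$ requires. Since your injectivity argument evaluates at $\dom(t)$ rather than $\ran(t)$, the error propagates there as well.

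Beyond the formulas, the step you defer as ``should follow formally'' --- the identity $\alpha_{qs}=\alpha_q\oplus(\theta(q)\cdot\alpha_s)$ and the role of (B2) --- is where the real content lies, and your proposal does not supply it. The paper handles this by first passing to the inverse subsemigroup $\olS$ of $\Omega(S,\theta)$ generated by $\Pi(S)\cup\xi(T)$, on which the restriction $\oltheta$ of $\Omega(\theta)$ is an honest Billhardt congruence with transversal $\olxi$; the known embedding theorem for Billhardt congruences then yields an embedding of $S$ into the $\lambda$-wreath product $\lwr{\Ker\theta}{T}$ via $s\mapsto(f_s,\theta(s))$ with $f_s(t)=\xi(t\ran(\theta(s)))\,s\,\xi^{-1}(t\theta(s))$, and the only genuinely new work is restricting $f_s$ to $T\ran(\theta(s))$ and checking that the domains of $h_{qs}$ and $h_q\oplus(\theta(q)\cdot h_s)$ agree. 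Your one-step plan could in principle be made to work, but as written it replaces this bookkeeping with an appeal to ``cocycle-like behavior'' that is not justified, and it never identifies where (B2) actually enters (namely, in making $\oltheta$ Billhardt so that the wreath-product coordinates are well defined and multiplicative). So the proposal, as it stands, has a genuine gap in addition to the incorrect formulas.
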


\begin{proof}
	Our proof consists of two parts. 
	First we apply the proof of \cite[Theorem 5.3.5]{LawInvSg} (see also \cite{BKK}) for the normal extension $(\olS,\oltheta)$, associated to $(S,\theta)$ in Section 3, to embed $(S,\theta)$ into a $\lambda$-wreath product of $\Ker\theta$ by $S/\theta$. 
	In the second part, we embed the image of this embedding into $\Hwrp{\Ker \theta}{S/\theta}$.
	
	For brevity, denote $\Ker \theta$ by $K$, $S/\theta$ by $T$, $E(T)$ by $E$, and the semilattice decomposition of $K$ corresponding to $\eta$ by
	$K = \bigcup_{e\in E} K_e$.
	Let us fix an almost Billhardt transversal $\xi$ to $\theta$, and denote by $\olS$ 
	the inverse subsemigroup of $\Omega(S,\theta)$ generated by $\Pi(S)\cup \xi(T)$, and by $\oltheta$ the restriction of $\Omega(\theta)$ to $\olS$.
	We have seen in Section \ref{sect:char-rsd} that $\oltheta$ is a Billhardt congruence on $\olS$, and the function $\olxi\colon \olS/\oltheta\to \olS$ defined by 
	$\olxi(\oltheta(\pi_s)) = \xi(\theta(s))$ and 
	$\olxi(\oltheta(\xi(t))) = \xi(t)$ is a Billhardt transversal to $\oltheta$.
	Thus we obtain by the proof of \cite[Theorem 5.3.5]{LawInvSg}
	that, for any $\chi\in \olS$, the function
	\begin{equation}
		\label{equ:olfchi}
		\olf_\chi\colon  \Pi(T)\to \Ker \oltheta,\quad 
		\olf_\chi(\omega^T_t) = \olxi(\omega^T_t (\chi\chi^{-1})^\downharpoonright) \,\chi\, \olxi^{-1}(\omega^T_t \chi^\downharpoonright)
	\end{equation}
	is well defined, and the function
	\begin{equation}
		\label{equ:olphi}
		\olphi\colon \olS \to \lwr{\Ker\oltheta}{\Pi(T)},\quad 
		\olphi(\chi) = (\olf_\chi,\chi^\downharpoonright)
	\end{equation}
	is an injective homomorphism. 
	As in the previous section, we obtain by
	Proposition \ref{prop:trhull-next}(\ref{prop:trhull-next2}) that $T$, and so also $\Pi(T)$, are isomorphic to $\olS/\oltheta$.
	Furthermore, since $\Pi(S)$ is an ideal in $\olS$, we clearly have 
	$\olf_\chi(\omega^T_t)\in \Pi(S)$ for every $\chi\in \Pi(S)$.
	Combining these facts, we can deduce from (\ref{equ:olfchi}) and (\ref{equ:olphi}) that, for any $s\in S$, the function
	\begin{equation*}
		f_s\colon  T\to \Ker \theta,\quad 
		f_s(t) = \xi(t \ran(\theta(s))) \,s\, \xi^{-1}(t \theta(s))
	\end{equation*}
	is well defined, and the function
	\begin{equation}
		\label{eqn:fi}
		\phi\colon S \to \lwr{\Ker\theta}{T},\quad 
		\phi(s) = (f_s,\theta(s))	
	\end{equation}
	is an injective homomorphism.
	
	Now, for every $s\in S$, let us define $h_s$ to be the restriction of $f_s$ to the principal left ideal $T\ran(\theta(s))$ of $T$.
	First we notice that $h_s\in P^\eta_{K,T}$ for any $s\in S$.
	For, if $t\in T\ran(\theta(s))$ then, by definition, 
	\begin{equation*}
		h_s(t) = f_s(t) = \xi(t)\,s\,\xi^{-1}(t\theta(s))
	\end{equation*}
	and
	\begin{equation*}
		\theta(h_s(t)) = t\theta(s)(t\theta(s))^{-1} =
		\ran(t\ran(\theta(s)) = \ran(t).
	\end{equation*}
	Thus we can define the function
	\begin{equation*}
		\psi\colon S \to \Hwrp{\Ker\theta}{T},\quad 
		\psi(s) = (h_s,\theta(s)).
	\end{equation*}	
	First we show that $\psi$ is a homomorphism. 
	Consider arbitrary elements 
	$\psi(q) = (h_q,\theta(q))$ and $\psi(s) = (h_s,\theta(s))$ in $\Hwrp{\Ker\theta}{T}$.
	Due to the facts that $\phi$ in (\ref{eqn:fi}) is a homomorphism, $h_s$ is a restriction of $f_s$ for every $s\in S$, and the binary operation in the first component of each of $\lwr{\Ker\theta}{T}$ and $\Hwrp{\Ker\theta}{T}$ is `pointwise' multiplication of functions into $K$, it suffices to check that the domains of the first components 
	$h_{qs}$ of $\psi(qs)$ and $h_q \oplus (\theta(q)\cdot h_s)$ of $\psi(q)\psi(s)$ 
	coincide.
	By definition, $\Hdom{h_{qs}} = T\ran(\theta(qs))$, and
	\begin{eqnarray*}
		\Hdom{(h_q \oplus (\theta(q)\cdot h_s))} 
		&=& T\ran(\theta(q)) \cap T\ran(\theta(s))(\theta(q))^{-1} = 
		T\theta(\ran(q)) \cap T\theta(\ran(s)q^{-1}) \cr 
		&=& T\theta(\dom(\ran(s)q^{-1})) = T\theta(\ran(qs)) = T\ran(\theta(qs))
	\end{eqnarray*}
	since $\ran(q) = \dom(q^{-1}) \ge \dom(\ran(s)q^{-1}) = q\ran(s)q^{-1} = \ran(qs)$.
	Thus $\psi$ is, indeed, a homomorphism.
	
	To verify that $\psi$ is injective, recall that injectivity of $\phi$ is proved in 
	\cite[Theorem 5.3.5]{LawInvSg} by checking that, for any $s\in S$, we have
	\begin{equation*}
		s = \xi^{-1}(\ran(\theta(s))) f_s(\ran(\theta(s))) \xi^{-1}(\theta(s)).
	\end{equation*}
	Since $\ran(\theta(s))\in T\ran(\theta(s)) = \Hdom h_s$ this implies that
	\begin{equation*}
		s = \xi^{-1}(\ran(\theta(s))) h_s(\ran(\theta(s))) \xi^{-1}(\theta(s))
	\end{equation*}
	also holds.
	Thus if $q,s\in S$ such that $\psi(q) = (h_q,\theta(q))$ and $\psi(s) = (h_s,\theta(s))$ are equal elements in $\Hwrp{\Ker\theta}{T}$, that is, 
	$h_q = h_s$ and $\theta(q) = \theta(s)$, then $q = s$ follows, and injectivity of $\psi$ is also verified.
\end{proof}

\begin{Rem}
	{\rm
	It is not difficult to realize that an argument analogous to the second part of the proof of Theorem \ref{ThmM} verifies that the $\lambda$-wreath product $\lwr{K}{T}$ and Houghton's wreath product $\Hwr{K}{T}$ are isomorphic to each other for any inverse semigroups $K$ and $T$.
	More precisely, the function}
	\begin{equation*}
		\Psi\colon \lwr{K}{T} \to \Hwr{K}{T},\quad 
		\Psi(f,t) = (f|_{Tt^{-1}},t),
	\end{equation*}
	{\rm
	where $f|_{Tt^{-1}}$ stands for the restriction of $f$ to the domain $Tt^{-1}$, can be seen to be an injective homomorphism.
	Finally, surjectivity is shown as follows.
	For every element $(h,t)$ in $\Hwr{K}{T}$, the pair $(\olh,t)$ where 
	$\olh\colon T\to K$ is defined by the rule 
	$\olh(x) = h(x\ran(t))\ (x\in T)$ belongs to $\lwr{K}{T}$ and 
	$\Psi(\olh,t) = (h,t)$.
	
	From the point of view of this work, the advantage of Houghton's wreath product is in the feature that its modified version corresponding to a normal extension triple
	$(K,\eta,T)$ is more natural than that in the respective $\lambda$-wreath product.

	The fact that the $\lambda$-wreath product $\lwr{K}{T}$ and Houghton's wreath product $\Hwr{K}{T}$ are isomorphic was observed also by J.\,Kadourek (private communication) when reading the prerpint of this work.
	This contributed to the author's decision to include this remark in the final version of the paper.}
\end{Rem}

\end{document}